\newtheorem{teo}{Theorem}
\newtheorem{prop}[teo]{Proposition}
\newtheorem{cor}[teo]{Corollary}
\theoremstyle{definition}
\newtheorem{defi}[teo]{Definition}
\newtheorem{rem}[teo]{Remark}
\newtheorem{ex}[teo]{Example}
\newtheorem{conj}[teo]{Conjecture}
\newcounter{listi}
\newcommand{\matRP}{\mathbb{RP}}
\newcommand{\calT}{{\cal T}}
\newcommand{\calR}{{\cal R}}
\newcommand{\calG}{{\cal G}}
\newcommand{\calD}{{\cal D}}
\newcommand{\calU}{{\cal U}}
\newcommand{\val}[1]{{\rm val}(#1)}
\newcommand{\valG}[1]{{\rm val}_{\calG}(#1)}
\newcommand{\degree}[1]{{\rm deg}(#1)}
\newcommand{\degG}[1]{{\rm deg}_{\calG}(#1)}
\newcommand{\mv}{{\rm mv}}
\newcommand{\md}{{\rm md}}
\newcommand{\st}[1]{{\rm st}(#1)}
\newcommand{\clst}[1]{{\rm clst}(#1)}
\newcommand{\link}[1]{{\rm link}(#1)}
\newcommand{\inter}{{\rm int}}
\newcommand{\T}{{\rm T}}
\newcommand{\phan}{\phantom{\Big|}}
\newcommand{\nI}{n_{\rm I}}
\newcommand{\nII}{n_{\rm II}}
\title{Decomposition and Enumeration\\of Triangulated Surfaces}
\author{Gennaro {\sc Amendola}\thanks{E-mail address: {\tt <amendola@mail.dm.unipi.it>}.}}
\begin{document}

\maketitle

\begin{abstract}
  We describe some theoretical results on triangulations of surfaces
  and we develop a theory on roots, decompositions and genus-surfaces.
  We apply this theory to describe an algorithm to list all triangulations of
  closed surfaces with at most a fixed number of vertices.
  We specialize the theory for the case where the number of vertices is
  at most~11 and we get theoretical restrictions on genus-surfaces allowing us to
  get the list of the triangulations of closed surfaces with at most~11
  vertices.
\end{abstract}

\vspace{.5cm}

{\small\noindent{\sc Keywords}:
surface, triangulation, decomposition, listing algorithm.}

\vspace{.5cm}

{\small\noindent{\sc MSC (2000)}: 57Q15.}

\section*{Introduction}

The enumeration of triangulations of surfaces ({\em i.e.}~simplicial complexes whose underlying topological space is a surface) was started by
Br\"uckner~\cite{Brueckner1897} at the end of the 19th century.
This study has been continued through the 20th century by many authors.
For instance, a complete classification of triangulations of closed surfaces with at
most 8 vertices was obtained by Datta~\cite{Datta1999},
and by Datta and Nilakantan~\cite{DattaNilakantan2002}, while the list of
such triangulations with at most 10 vertices was obtained by
Lutz~\cite{Lutz:10vert}.
The numbers of triangulations, depending on genus and number of vertices, are
collected in~\cite{Lutz:sito} and~\cite{Sulanke:sito}.

We point out that all these studies, as well as this paper, deal with genuine piecewise linear triangulations of surfaces, and not with mere gluings of triangles (for which different techniques should be used).

We will describe here an algorithm to list the triangulations of closed surfaces with at
most a fixed number of vertices.
This algorithm is based on some theoretical results which are interesting in
themselves.
By specializing this theory for the case where the number of vertices is at
most~11, we are able to improve the algorithm for this particular case.
We have hence written the computer program {\tt
  trialistgs11}~\cite{amendola:sito} giving a complete enumeration of all
triangulations of closed surfaces with at most 11 vertices.
Table~\ref{tab:tria_numbers} gives the detailed numbers of such triangulations.
This result has been obtained independently by Lutz and
Sulanke~\cite{Lutz-Sulanke:12vert}.
\begin{table}
\begin{center}
\begin{small}
\begin{tabular*}{\linewidth}{@{}l@{\extracolsep{12pt}}l@{\extracolsep{12pt}}r@{\extracolsep{12pt}}r@{\extracolsep{12pt}}r@{\extracolsep{\fill}}l@{\extracolsep{12pt}}l@{\extracolsep{12pt}}r@{\extracolsep{12pt}}r@{\extracolsep{12pt}}r@{}}
\toprule\\[-3mm]
\phantom{$V$} & \phantom{$S$} & \phantom{T} & \phantom{R} &
\phantom{N} &  \phantom{$V$} & \phantom{$S$}  & \phantom{T} &
\phantom{R} & \phantom{N} \\[-12pt]
$V$& $S$       &   T&   R&  N& $V$& $S$       &      T &      R &      N\\
\cmidrule{1-5}\cmidrule{6-10}
  4& $S^2$     &   1&   1&   &  10& $S^2$     &     233&      12&    221\\
   &           &    &    &   &    & $T^2$     &    2109&     887&   1222\\
  5& $S^2$     &   1&    &  1&    & $S^+_2$   &     865&     865&       \\
   &           &    &    &   &    & $S^+_3$   &      20&      20&       \\
  6& $S^2$     &   2&   1&  1&    & $\matRP^2$&    1210&     185&   1025\\
   & $\matRP^2$&   1&   1&   &    & $K^2$     &    4462&    1971&   2491\\
   &           &    &    &   &    & $S^-_3$   &   11784&    9385&   2399\\
  7& $S^2$     &   5&   1&  4&    & $S^-_4$   &   13657&   13067&    590\\
   & $T^2$     &   1&   1&   &    & $S^-_5$   &    7050&    7044&      6\\
   & $\matRP^2$&   3&   2&  1&    & $S^-_6$   &    1022&    1022&       \\
   &           &    &    &   &    & $S^-_7$   &      14&      14&       \\
  8& $S^2$     &  14&   2& 12&    &           &        &        &       \\
   & $T^2$     &   7&   6&  1&  11& $S^2$     &    1249&      34&   1215\\
   & $\matRP^2$&  16&   8&  8&    & $T^2$     &   37867&    9732&  28135\\
   & $K^2$     &   6&   6&   &    & $S^+_2$   &  113506&   93684&  19822\\
   &           &    &    &   &    & $S^+_3$   &   65878&   65546&    332\\
  9& $S^2$     &  50&   5& 45&    & $S^+_4$   &     821&     821&       \\
   & $T^2$     & 112&  75& 37&    & $\matRP^2$&   11719&    1050&  10669\\
   & $\matRP^2$& 134&  36& 98&    & $K^2$     &   86968&   23541&  63427\\
   & $K^2$     & 187& 133& 54&    & $S^-_3$   &  530278&  298323& 231955\\
   & $S^-_3$   & 133& 133&   &    & $S^-_4$   & 1628504& 1314000& 314504\\
   & $S^-_4$   &  37&  37&   &    & $S^-_5$   & 3355250& 3175312& 179938\\
   & $S^-_5$   &   2&   2&   &    & $S^-_6$   & 3623421& 3596214&  27207\\
   &           &    &    &   &    & $S^-_7$   & 1834160& 1833946&    214\\
   &           &    &    &   &    & $S^-_8$   &  295291&  295291&       \\
   &           &    &    &   &    & $S^-_9$   &    5982&    5982&       \\[1mm]
\bottomrule
\end{tabular*}
\end{small}
\end{center}
\caption{Number of triangulations~(T), roots~(R) and non-roots~(N), with at most 11
    vertices, depending on the number of vertices $V$ and on the closed surface $S$
    triangulated.}
  \label{tab:tria_numbers}
\end{table}

The aim of this paper is to describe the theory of what we call roots,
decompositions, and genus-surfaces, and to describe the algorithm based on this theory.
The implementation {\tt trialistgs11} of the algorithm is not
designed to be as fast as possible: more precisely, our program is slower than
Lutz-Sulanke's program~\cite{Lutz-Sulanke:12vert}.

A triangulation of a closed surface is a {\em root} if either it has no 3-valent vertex or it is
the boundary of the tetrahedron.
We will see that each triangulation of a closed surface can be transformed in a unique root by
repeatedly contracting edges containing a 3-valent vertex.
By uniqueness, roots divide the class of all triangulations of closed surfaces
into disjoint sub-classes, depending on their root.
One can think of roots as irreducible triangulations when only
edge-contractions deleting edges containing a 3-valent vertex are allowed.
Anyway, there are some differences; for instance, we gain uniqueness (in fact a
triangulation may have more than one irreducible triangulation), but we loose
finiteness (in fact we have infinitely many roots for each surface).

It is worth noting that the number of roots is by far smaller than the number of triangulations, at least as the number of vertices increases (see Table~\ref{tab:tria_numbers}).
Moreover, we note also that for the sphere $S^2$ the number of roots is very small, hence roots seem to work better for the sphere $S^2$ than for other surfaces.

Roughly speaking a {\em decomposition} of a closed triangulated surface is obtained by
dividing it into some disjoint triangulated discs and one triangulated surface (called
{\em genus-surface}) in such a way that at least one disc contains in its interior a
maximal-valence vertex of the triangulation.
Such a decomposition is called {\em minimal} if the number of triangles in the
genus-surface is the smallest possible one.
We will see that minimal decompositions fulfill many properties proved
theoretically.
Roughly speaking the algorithm consists of listing the pieces of such minimal
decompositions (by using the properties to simplify the search) and then
gluing the pieces found.

\subsection*{Definitions and notations}

From now on $S$ will always denote a connected compact surface.

\paragraph{Triangulated surfaces}
A {\em triangulation $\calT$ of a (connected compact) surface $S$} is a
simplicial complex whose underlying topological space is the surface $S$.
The vertices of the triangulation $\calT$ are usually denoted by numbers, say
$1,2,\ldots,n$; the choice of a (different) number for each vertex is called
{\em labeling}.
Obviously, the change of the labeling ({\em re-labeling}) modifies neither the
triangulation nor the surface.

When dealing with triangulations, there is the problem of deciding whether the
underlying topological space of a triangulation belongs to a particular class
(in our case, the class of surfaces).
This is in general a difficult matter, for instance there is {\em no}
algorithm to decide whether the underlying topological space of a given
$d$-dimensional simplicial complex is a $d$-sphere if $d\geqslant 5$.
In our case, in order to decide whether the underlying topological space
of a triangulation is a surface, we can check the property ``the link of each
vertex is a circle or an interval''.
The case of the interval is forbidden in the closed case.

Since we deal only with triangulations of surfaces, in order to define a
triangulation, it is enough to list the triangles.
Hence, for instance, the boundary of the tetrahedron can be encoded by ``$123\
124\ 134\ 234$'', see Fig.~\ref{fig:root_example}-left.
\begin{figure}
  \begin{center}
    \begin{tabular}{@{}c@{\extracolsep{0.6cm}}c@{\extracolsep{0.6cm}}c@{}}
      \begin{minipage}[t]{3.6cm}{\small{\begin{center}
              boundary
              
              of the tetrahedron
            \end{center}}}\end{minipage}
      &
      \begin{minipage}[t]{3.6cm}{\small{\begin{center}
              boundary
              
              of the octahedron
            \end{center}}}\end{minipage}
      &
      \begin{minipage}[t]{3.6cm}{\small{\begin{center}
              6-vertex $\matRP^2$
            \end{center}}}\end{minipage}
      \\
      \begin{minipage}[c]{3.6cm}{\small{\begin{center}
              \includegraphics{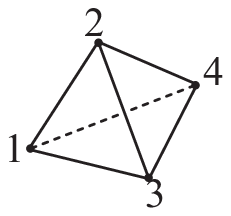} 
            \end{center}}}\end{minipage}
      &
      \begin{minipage}[c]{3.6cm}{\small{\begin{center}
              \includegraphics{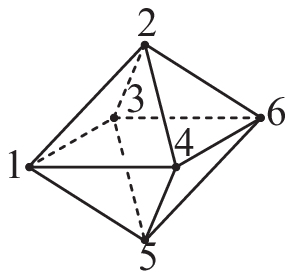}
            \end{center}}}\end{minipage}
      &
      \begin{minipage}[c]{3.6cm}{\small{\begin{center}
              \includegraphics{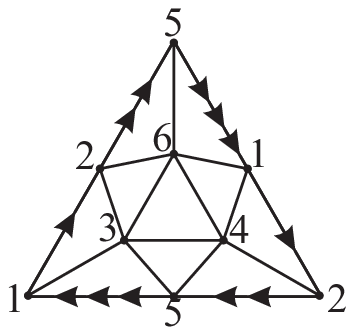} 
            \end{center}}}\end{minipage}
      \\
      \begin{minipage}[t]{3.6cm}{\small{\begin{center}
              123 124 134 234
            \end{center}}}\end{minipage}
      &
      \begin{minipage}[t]{3.6cm}{\small{\begin{center}
              123 124 135 145 236
              
              246 356 456
            \end{center}}}\end{minipage}
      &
      \begin{minipage}[t]{3.6cm}{\small{\begin{center}
              123 124 135 146 156
              
              236 245 256 345 346
            \end{center}}}\end{minipage}
    \end{tabular}
  \end{center}
  \caption{Examples of triangulations/roots.}
  \label{fig:root_example}
\end{figure}
Moreover, the order of the triangles in such lists can be changed arbitrarily,
hence it is not restrictive to choose always the lexicographically smallest
one.

It is well known~\cite{Rado1925} that each closed surface can be
{\em triangulated}, {\em i.e.}~it is the underlying topological space of a
simplicial complex.
This and the following paragraph allow us to forget about the abstract
surface and to use the term {\em triangulated surface}.

\paragraph{Euler characteristic}
For an arbitrary closed (orientable or non-orientable)
surface $S$ the Euler characteristic $\chi(S)$ of $S$ is the
alternating sum of the number of vertices $V(\calT)$, the number of edges
$E(\calT)$, and the number of triangles $T(\calT)$, {\em
  i.e.}~$\chi(S)=V(\calT)-E(\calT)+T(\calT)$, of any triangulation $\calT$ of $S$.
This definition makes sense because it turns out that it does not depend on the
triangulation $\calT$ but it depends only on the topological type of the surface $S$.

Since each triangle contains three edges and each edge is contained in two
triangles, we have $2E(\calT)=3T(\calT)$.
Thus, the number of vertices $V(\calT)$ and the Euler characteristic
$\chi(\calT)$ determine $E(\calT)$ and $T(\calT)$, by the formulae
$E(\calT)=3V(\calT)-3\chi(\calT)$ and $T(\calT)=2V(\calT)-2\chi(\calT)$.

A closed orientable surface $S^+_g$ of genus $g$ has Euler characteristic
$\chi(S^+_g)=2-2g$, whereas a closed non-orientable surface $S^-_g$ of genus $g$ has
Euler characteristic $\chi(S^-_g)=2-g$.
For instance, $S^+_0$ is the sphere $S^2$, $S^+_1$ is the torus $T^2$, $S^-_1$ is
the projective plane $\matRP^2$, $S^-_2$ is the Klein bottle $K^2$.
The topological type of a closed surface is completely determined if it is
known its Euler characteristic (or, equivalently, its genus) and whether it is
orientable or not; hence the notation $S^\pm_*$ above makes sense.

The smallest possible number of vertices $V(\calT)$ for a triangulation
$\calT$ of a closed surface $S$ is determined by Heawood's
bound~\cite{Heawood1890}
$$
V(\calT)\geqslant\Big\lceil{\small{\frac{1}{2}}\big(7+\sqrt{49-24\chi (S)}\big)}\Big{\rceil}.
$$
Ringel~\cite{Ringel1955} (for the non-orientable case), and then Jungerman and
Ringel~\cite{Jungerman-Ringel1980} (for the orientable case)
have proved that this bound is tight, except for $S^+_2$, the Klein bottle
$K^2$, and $S^-_3$, for each of which an extra vertex has to be added.

\paragraph{Notations}
Let now $\calT$ be a triangulation of a (non-necessarily closed) surface $S$.
We will denote by $\partial\calT$ the triangulation of the boundary of $S$
induced by $\calT$, and by $\inter(\calT)$ the triangulation of the
interior of $S$ induced by $\calT$.
If $S$ is closed, we have $\partial\calT=\emptyset$ and $\inter(\calT)=\calT$.
For each simplex $\sigma\in\calT$ we will denote
by $\st{\sigma}$ the open star of $\sigma$ ({\em i.e.}~the sub-triangulation of $\calT$ made up of the simplexes containing $\sigma$),
by $\clst{v}$ the closed star of $v$ ({\em i.e.}~the closure of $\st{\sigma}$),
and by $\link{v}$ the link of $v$ ({\em i.e.}~the sub-triangulation of $\clst{\sigma}$ made up of the simplexes disjoint from $\sigma$).
For each vertex $v\in\calT$ we will moreover denote by $\val{v}$ the valence of $v$
({\em i.e.}~the number of triangles of $\calT$ containing $v$) and by
$\degree{v}$ the degree of $v$ ({\em i.e.}~the number of vertices of $\calT$
adjacent to $v$.
When a sub-triangulation $\calU$ of $\calT$ will be considered, we will denote
by ${\rm val}_{\calU}(v)$ the valence of $v$ in $\calU$ and by ${\rm
  deg}_{\calU}(v)$ the degree of $v$ in $\calU$.
Note that $\degree{v}=\val{v}$ if $v\in\inter(\calT)$, while $\degree{v}=\val{v}+1$ if
$v\in\partial\calT$.
We will denote by $\mv(\calT)$ the maximal valence of the vertices of $\calT$
and by $\md(\calT)$ the maximal degree of the vertices of $\calT$.
Note that if $\calT$ is closed, $\md(\calT)=\mv(\calT)$.
With a slight abuse of notation we will freely intermingle between closed and
open triangles.

\begin{rem}\label{rem:small_tria}
  The boundary of the tetrahedron is the unique closed triangulated surface with maximal
  vertex-valence 3.
  The boundary of the octahedron, shown in Fig.~\ref{fig:root_example}-centre,
  is the unique closed triangulated surface with maximal vertex-valence 4 and
  without 3-valent vertices.
\end{rem}

\section{Roots}\label{sec:roots}

We will describe in this section the notion of root of a closed triangulated
surface.
Triangulated surfaces can be modified by applying a move called \T-{\em move}:
it consists in replacing an open triangle of the triangulation with the open star of
a new 3-valent vertex ({\em i.e.}~one new vertex, three new edges and three new
triangles), as shown in Fig.~\ref{fig:Tmove}.
Note that a \T-move can be applied for each triangle of a triangulated surface.
On the contrary, an {\em inverse} \T-move can be applied only if the triangulated surface $\calT$ has a 3-valent vertex and the link of this vertex does not bound already a triangle in $\calT$ (for otherwise the new triangle added by the inverse \T-move would already be in $\calT$); moreover, if the two conditions above are fulfilled, an inverse \T-move can be applied (the result being indeed a triangulated surface).
It is worth noting that the boundary of the tetrahedron is the only triangulated surface having a 3-valent vertex whose link bounds a triangle.

\begin{figure}
  \begin{center}
\psfrag{T}{\small \T}
    \includegraphics{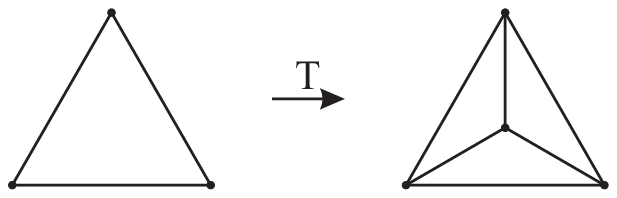}
  \end{center}  
  \caption{T-move.}
  \label{fig:Tmove}
\end{figure}

\begin{defi}
  A {\em root} of a closed triangulated surface $\calT$ is a triangulation
  $\calR$ obtained from $\calT$ by a sequence of inverse \T-moves and such
  that no inverse \T-move can be applied to it.
\end{defi}

\begin{ex}
  The boundary of the tetrahedron, the boundary of the octahedron, and the unique
  $\matRP^2$ with 6 vertices, shown in Fig.~\ref{fig:root_example}, are roots.
\end{ex}

\begin{rem}\label{rem:tetra_3}
  The boundary of the tetrahedron is the only root with a 3-valent vertex.
  In fact, as said above, when a closed triangulated surface has a 3-valent vertex, an inverse \T-move
  can be applied unless the added triangle is already in the triangulation,
  being then the boundary of the tetrahedron.
\end{rem}

\begin{rem}\label{rem:maxval_4}
  Remarks~\ref{rem:small_tria} and~\ref{rem:tetra_3} obviously imply that the boundary of the
  tetrahedron and the boundary of the octahedron are the only roots with
  maximal valence at most 4.
\end{rem}

Since \T-moves are particular edge-contractions, each irreducible closed
triangulated surface is a root.
But there are finitely many irreducible triangulations of each closed
surface~\cite{BarnetteEdelson1988}, while each closed surface $S$ has
infinitely many roots.
In fact, consider the boundary of the octahedron if $S=S^2$, or an irreducible
triangulation of $S$ otherwise; such triangulations are roots.
By repeatedly applying edge-expansions creating 4-valent vertices (as shown
in Fig.~\ref{fig:4val_create}), we get infinitely many different roots of
$S$ (they are roots because no 3-valent vertex appears).
\begin{figure}
  \centerline{\includegraphics{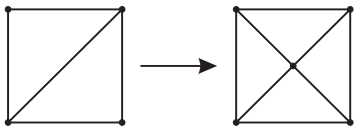}}
  \caption{How to create a new 4-valent vertex via an edge-expansion.}
  \label{fig:4val_create}
\end{figure}
It is worth noting that we have used the boundary of the octahedron (which
is not an irreducible triangulation) because we need a root without 3-valent
vertices, so that, when we apply edge-expansions creating 4-valent vertices,
we get no 3-valent vertex; while the sphere has only one irreducible
triangulation~\cite{Steinitz-Rademacher1934}, the boundary of the tetrahedron.

\begin{teo}\label{teo:root}
  Each closed triangulated surface has exactly one root.
\end{teo}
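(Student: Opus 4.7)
The plan is to prove existence by a termination argument and uniqueness by a local-confluence (Newman's lemma) argument. Each inverse \T-move removes a 3-valent vertex and thus strictly decreases $V(\calT)$, so any maximal sequence of inverse \T-moves starting from $\calT$ terminates in a triangulation admitting no further inverse \T-move, which is by definition a root. This proves existence.

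For uniqueness, the reduction by inverse \T-moves is terminating, so by Newman's lemma it suffices to prove local confluence: whenever inverse \T-moves at two distinct 3-valent vertices $u,v$ of $\calT$ are both applicable, producing triangulations $\calT_u$ and $\calT_v$, these reduce via further inverse \T-moves to a common triangulation (up to relabeling). A key preliminary is that $u$ and $v$ are non-adjacent. Writing $\link{u}=\{a,b,c\}$, suppose $v=a$; then $v$ lies in the two triangles $uab$ and $uca$ of $\st{u}$ and, being $3$-valent, satisfies $\link{v}=\{u,b,c\}$, so its third triangle at $v$ is $vbc=abc$. But applicability of the inverse \T-move at $u$ requires $abc\notin\calT$, a contradiction.

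Now split on whether $\link{u}=\link{v}$. If $\link{u}\ne\link{v}$, then since $u$ and $v$ are non-adjacent the stars $\st{u}$ and $\st{v}$ share no triangle; the inverse \T-move at $u$ adds only the triangle $abc=\link{u}$, which neither contains $v$ nor equals $\link{v}$, so the inverse \T-move at $v$ remains applicable in $\calT_u$ and commutes with that at $u$, closing the diamond. If instead $\link{u}=\link{v}=\{a,b,c\}$, then $\calT$ contains the six triangles $uab,ubc,uca,vab,vbc,vca$, and each of the nine edges they span is already in two of them; since a closed surface has exactly two triangles per edge, these six triangles form a closed subcomplex (the triangular bipyramid, homeomorphic to $S^2$). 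As $\calT$ is a connected closed surface, it coincides with this subcomplex. The inverse \T-moves at $u$ and $v$ then produce two boundaries of tetrahedra, swapped by the $u\leftrightarrow v$ involution of the bipyramid, and hence are the same root up to relabeling.

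The main obstacle is precisely this second case, where the two moves not only fail to commute but each destroys the applicability of the other. The saving grace is that the local coincidence $\link{u}=\link{v}$ already forces $\calT$ to be the bipyramid, after which both moves terminate directly in the tetrahedron. Once this sporadic case is isolated, the generic commutation in the first case closes the diamond, and Newman's lemma delivers uniqueness.
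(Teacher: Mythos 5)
Your proof is correct and takes a genuinely different route from the paper's. The paper proves uniqueness by strong induction on the length of the longest sequence of inverse \T-moves, commuting the two first moves inside the inductive step; you invoke Newman's lemma, so that only local confluence needs checking, a purely local statement about $\calT$ and two of its $3$-valent vertices. Both arguments share the key lemma that the two simultaneously reducible vertices $u,v$ must be non-adjacent, and both commute the moves in the generic case. Your explicit case split on $\link{u}=\link{v}$ exposes a subtlety the paper glosses over: when $\link{u}=\link{v}$, the move at $u$ destroys the applicability of the move at $v$ (the newly created triangle now bounds $\link{v}$), so $\st{v}\cap\st{v'}=\emptyset$ alone does not justify the paper's assertion that the move at $v'$ can still be applied to $\calT_1$. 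You correctly show that $\link{u}=\link{v}$ forces $\calT$ to be the triangular bipyramid, in which case both reductions terminate immediately in the boundary of the tetrahedron and the diamond closes trivially. The paper's inductive framework still yields the right conclusion there (the bipyramid is one move from a root, so the induction bottoms out), but its commutation step is incomplete as written; the rewriting-theoretic framing forces the sporadic case into the open, which is what the Newman's-lemma approach buys, at the cost of invoking an external lemma rather than being fully self-contained. The one point you should state explicitly is that the reduction system is taken up to relabeling (as the paper stipulates), so that the two tetrahedra on $\{a,b,c,v\}$ and $\{a,b,c,u\}$ in the bipyramid case count as the same normal form.
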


\begin{proof}
  Let $\calT$ be a closed triangulated surface.
  In order to prove the existence of a root for $\calT$, it is enough to
  repeatedly apply inverse \T-moves until it is possible, getting finally a
  root of $\calT$ (note that each inverse \T-move decreases by one
  the number of vertices, hence this procedure comes to an end).

  The proof of uniqueness is slightly longer.
  We prove it by induction on the length of the longest sequence of
  inverse \T-moves needed to get a root from $\calT$ (obviously,
  there is a longest one).
  If such a sequence has length 0, there is nothing to prove, in fact $\calT$
  is already a root and it has no other root, because inverse
  \T-moves cannot be applied to it.

  Suppose now that, if a closed triangulated surface $\calT$ has a root $\calR$ obtained
  from $\calT$ via a sequence having length $n$ and $n$ is the maximal length of
  such sequences, then $\calR$ is the unique root of $\calT$; and let us prove
  that, if a closed triangulated surface $\calT$ has a root $\calR$ obtained from $\calT$
  with a sequence having length $n+1$ and $n+1$ is the maximal length of such
  sequences, then $\calR$ is the unique root of $\calT$.
  In order to do this, consider the sequence
  $$
  \xymatrix{
    {\calT}
    \ar[r]^{m_1} &
    {\calT_1}
    \ar[r]^(.4){m_2} &
    {\phantom{\calT}\ldots\phantom{\calT}}
    \ar[r]^(.55){m_{n-1}} &
    {\calT_{n-1}}
    \ar[r]^(.55){m_{n}} &
    {\calT_{n}}
    \ar[r]^{m_{n+1}} &
    \calR
  }
  $$
  of inverse \T-moves relating $\calT$ to $\calR$, and suppose
  by contradiction that another sequence
  $$
  \xymatrix{
    {\calT}
    \ar[r]^{m'_1} &
    {\calT'_1}
    \ar[r]^(.4){m'_2} &
    {\phantom{\calT}\ldots\phantom{\calT}}
    \ar[r]^(.53){m'_{n'-1}} &
    {\calT_{n'-1}}
    \ar[r]^(.6){m_{n'}} &
    \calR'
  }
  $$
  of inverse \T-moves relating $\calT$ to another root $\calR'$
  exists (obviously, $n'\leqslant n+1$).
  Now, consider the two triangulations $\calT_1$ and $\calT'_1$, and note that
  the longest sequence of inverse \T-moves from each of them to the respective
  root has length at most $n$ (because otherwise we could find a sequence of
  inverse \T-moves from $\calT$ to a root with length greater than $n+1$).
  Hence, we can apply the inductive hypothesis and we have that $\calR$
  and $\calR'$ are the only roots of $\calT_1$ and $\calT'_1$,
  respectively.

  In order to prove that $\calR = \calR'$, the idea is to change the sequences
  used to obtain $\calR$ and $\calR'$.
  Let us call $v$ and $v'$ the (3-valent) vertices removed by the inverse
  \T-moves $m_1$ and $m'_1$, respectively.
  If $v=v'$, then $\calT = \calT'$ and hence $\calR = \calR'$.
  Therefore, we suppose $v\neq v'$.
  Note that $v$ and $v'$ are not adjacent, because otherwise $\calT$ would be
  the boundary of the tetrahedron (see Fig.~\ref{fig:2_adj_3val} and note that two
  vertices can be the endpoints of at most one edge) and this is not the case
  (for no inverse \T-move can be applied to the boundary of the tetrahedron).
  \begin{figure}
\psfrag{v}{\small $v$}
\psfrag{vp}{\small $v'$}
    \centerline{\includegraphics{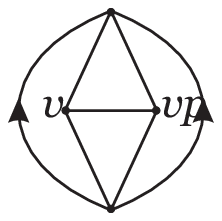}}
    \caption{If a closed triangulated surface contains two 3-valent adjacent vertices, it is
      the boundary of the tetrahedron.}
    \label{fig:2_adj_3val}
  \end{figure}
  Hence, we have $\st{v}\cap\st{v'}=\emptyset$ and the inverse \T-move
  removing $v'$ (resp.~$v$) can be applied to $\calT_1$ (resp.~$\calT'_1$);
  let us continue calling $m'_1$ (resp.~$m_1$) this move.
  In both cases we get the same triangulation, say $\calT''_2$.
  Now, let us consider a sequence
  $$
  \xymatrix{
    {\calT''_2}
    \ar[r]^{m''_3} &
    {\calT''_3}
    \ar[r]^(.4){m''_4} &
    {\phantom{\calT}\ldots\phantom{\calT}}
    \ar[r]^(.52){m''_{n''-1}} &
    {\calT_{n''-1}}
    \ar[r]^(.6){m''_{n''}} &
    \calR''
  }
  $$
  of inverse \T-moves relating $\calT''_2$ to a root $\calR''$, and
  let us use the sequences
  $$
  \xymatrix@R=0pt{
    {\calT_1} 
    \ar[rd]^{m'_1} & & & & & \\
    & {\calT''_2}
    \ar[r]^{m''_3} &
    {\calT''_3}
    \ar[r]^(.4){m'_4} &
    {\phantom{\calT}\ldots\phantom{\calT}}
    \ar[r]^(.52){m''_{n''-1}} &
    {\calT_{n''-1}}
    \ar[r]^(.6){m''_{n''}} &
    \calR'' \\
    {\calT'_1}
    \ar[ru]_{m_1} & & & & &
  }
  $$
  to obtain the roots $\calR$ and $\calR'$, which are equal to $\calR''$ by
  uniqueness.
  Hence, we have proved that $\calR = \calR'$, and we have done.
\end{proof}

\begin{rem}\label{rem:sub-classes}
  This theorem implies that the class of closed triangulated
  surfaces has a partition into (disjoint) sub-classes depending on their
  root.
  This fact implies that each invariant of a root, and in particular
  the root itself, is actually an
  invariant of all the closed triangulated surfaces having that root.
\end{rem}

\begin{rem}
  For the sake of completeness we will also prove that irreducible
  triangulations are in general not unique.
  More precisely, there are triangulations to which we can apply two
  edge-contractions leading to two different irreducible triangulations.
  Take for instance two different irreducible triangulations (say $\calT_1$
  and $\calT_2$) related by a {\em flip} ({\em i.e.}~a move modifying a square
  made up of two adjacent triangles by changing the diagonal, see
  Fig.~\ref{fig:two_irred}-below).
  The proof of the existence of such a pair can be found
  in~\cite{Sulanke2006:pre}.
  Now, consider the triangulation $\calT$ obtained by dividing the square of
  the flip by using both the diagonals, see Fig.~\ref{fig:two_irred}-above.
  We can apply two edge-contractions to $\calT$ leading to $\calT_1$ and
  $\calT_2$, respectively, as shown in Fig.~\ref{fig:two_irred}.
  \begin{figure}
\psfrag{T1}{\small $\calT_1$}
\psfrag{T2}{\small $\calT_2$}
\psfrag{flip}{\small flip}
    \centerline{\includegraphics{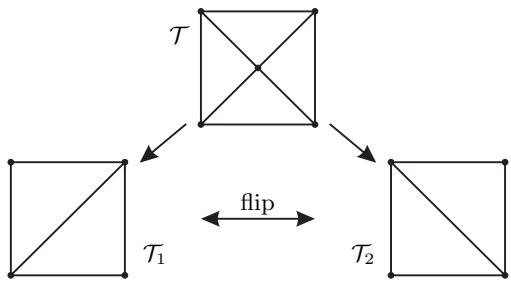}}
    \caption{Non-uniqueness of irreducible triangulations.
      In the lower part it is shown a flip, while the two moves above are
      edge-contractions leading to two different irreducible triangulations
      from $\calT$.}
    \label{fig:two_irred}
  \end{figure}
\end{rem}

We conclude this section by noting that roots could be used, for instance, to
try to prove the following
conjecture~\cite{Duke1970; Hougardy-Lutz-Zelke2006pre}.

\begin{conj}
  Each triangulation of the closed orientable surfaces with at most genus~$4$
  is realizable in ${\mathbb R}^3$ by straight edges, flat triangles, and
  without self-intersections.
\end{conj}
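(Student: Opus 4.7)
The statement is a conjecture, so any ``proof'' I can propose is actually a strategy for attacking it; the text itself suggests that roots should be the tool. The plan is to split the problem into two pieces via Theorem~\ref{teo:root}: first show that realizability is preserved by \T-moves, and then reduce the conjecture to the realizability of roots.

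The easy half is the behaviour under \T-moves. Suppose $\calT$ is realized in $\mathbb{R}^3$ by straight edges and flat triangles without self-intersections, and $\calT'$ is obtained from $\calT$ by a single \T-move on a triangle $\tau$ with vertices $A,B,C$. Pick any point $P$ in the relative interior of the flat triangle $\tau\subset\mathbb{R}^3$ (for instance its centroid) and declare it to be the new vertex. The three new triangles $PAB,PBC,PCA$ are flat and coplanar with $\tau$, and together they cover exactly the point set of $\tau$. Hence the underlying subset of $\mathbb{R}^3$ is unchanged, no self-intersections are introduced, and $\calT'$ is realized. Iterating this, if the root $\calR$ of any closed triangulated surface $\calT$ is realizable, then so is $\calT$ itself, because $\calT$ is obtained from $\calR$ by finitely many \T-moves. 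By Remark~\ref{rem:sub-classes} this shows that realizability is an invariant of the root, so the conjecture reduces to the following statement: \emph{every root of a closed orientable surface of genus at most $4$ is realizable}.

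The hope in pursuing this reduction is that roots are much sparser than arbitrary triangulations (see Table~\ref{tab:tria_numbers}) and have controlled local structure: by Remark~\ref{rem:tetra_3}, apart from the tetrahedron every root has minimum valence at least $4$, which makes ad hoc geometric constructions easier. For genus $0$ the reduced conjecture is immediate from Steinitz's theorem, since a root of $S^2$ is $3$-connected and simplicial and so is realized as the boundary of a convex $3$-polytope; the \T-move argument above then recovers Steinitz-type realizability for arbitrary triangulations of~$S^2$. For genus $1\leqslant g\leqslant 4$ one would try to classify or structurally describe the roots (using the decomposition/genus-surface theory developed later in the paper) and then realize each family explicitly, for example by placing the genus-surface piece into a small handlebody region of $\mathbb{R}^3$ and extending by embedded triangulated discs.

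The main obstacle is clearly the second step: even restricted to roots, realizability is genuinely open, and the class of roots of a fixed surface is still infinite, so a finite case check of the kind produced by \texttt{trialistgs11} does not suffice on its own. The substantive content of a proof along these lines would be a structural theorem saying that every root of $S^+_g$ ($g\leqslant 4$) can be decomposed into a bounded genus-surface piece together with triangulated discs in a way that admits a simultaneous straight-line flat-triangle embedding in $\mathbb{R}^3$; the \T-move lemma merely propagates such an embedding to every triangulation in the sub-class of Remark~\ref{rem:sub-classes}.
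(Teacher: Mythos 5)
You correctly recognize that the statement is an open conjecture and that the paper offers no proof, only the observation that it suffices to verify realizability for roots; your \T-move argument (placing the new vertex in the interior of the flat triangle so the three new coplanar triangles exactly tile the old one) supplies the easy lemma that the paper asserts without proof, and your genus-$0$ remark via Steinitz is the same example the paper cites. This matches the paper's intent exactly, and you are also right that the reduction does not close the problem, since the roots of each $S^+_g$ form an infinite family and no structural embedding theorem for them is currently available.
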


This conjecture is true for spheres (as proved by Steinitz~\cite{Steinitz1922},
and by Steinitz and Rademacher~\cite{Steinitz-Rademacher1934}), while its
natural extension to closed orientable surfaces of greater genus is not
true (as proved by Bokowski and Guedes de
Oliveira~\cite{Bokowski-Guedes_de_Oliveira2000}, and
by Schewe~\cite{Schewe2006pre}).
Obviously, it makes sense only for orientable closed surfaces, because
non-orientable closed surfaces are not embeddable in ${\mathbb R}^3$.
Note that, if a root is realizable, every triangulation having that root is
also realizable; hence, in order to prove the conjecture, it would be enough
to prove it only for roots.

\section{Genus-surfaces}\label{sec:genussurfs}

We will describe in this section the notion of decomposition and genus-surface
of a closed triangulated surface.

\begin{defi}
  A {\em decomposition} of a closed triangulated surface $\calT$ is a triple
  $(\calG,\calD,\{\calD_1,\ldots,\calD_n\})$, with $n\geqslant 0$, such that:
  \begin{itemize}
  \item
    $\calG,\calD,\calD_1,\ldots,\calD_n$ are sub-triangulations of $\calT$,
  \item
    $\calD,\calD_1,\ldots,\calD_n$ are triangulated discs,
  \item
    $\inter(\calD)$ contains a maximal-valence vertex of $\calT$,
  \item
    $\calG \cup \calD \cup \calD_1 \cup \ldots \cup \calD_n = \calT$,
  \item
    the intersection between each pair of these sub-triangulations is either a
    (triangulated) circle or empty.
  \end{itemize}
  The surface $\calG$ is called {\em genus-surface (of the decomposition)}, and
  $\calD$ is called {\em main disc (of the decomposition)}.
\end{defi}

First of all, we note that decompositions of closed triangulated surfaces exist.
In fact, if $\calT$ is a closed triangulated surface, then for each maximal-valence
vertex $v\in\calT$ we have that $(\calT\setminus\st{v},\clst{v},\emptyset)$ is
a decomposition of $\calT$.
We note also that the decompositions of a triangulation $\calT$ hold some
invariants of $\calT$.
For instance, the maximal valence of $\calT$ is the maximal valence of
internal vertices of the main disc only, and the genus of $\calT$ can
be computed from the genus-surface only.
As done for triangulations, two decompositions are considered
equivalent if they are obtained from each other by a re-labeling.

\begin{rem}
  Genus-surfaces are connected triangulated surfaces. In fact, they are
  obtained from closed triangulated surfaces by removing open triangulated discs whose closures are disjoint.
\end{rem}

\begin{defi}
  A decomposition $(\calG,\calD,\{\calD_1,\ldots,\calD_n\})$ of a closed
  triangulated surface $\calT$ is {\em minimal} if the number of triangles in $\calG$
  is minimal among all the decompositions of $\calT$.
  In such a case the genus-surface (of the decomposition) is also called {\em
    minimal}.
\end{defi}

By finiteness, minimal decompositions obviously exist.

\begin{ex}\label{ex:sphere_dec}
  The minimal decompositions of a triangulated sphere $\calT$ are exactly those of type
  $(\{T\},\calT\setminus\{T\},\emptyset)$, such that $T$ is a triangle of
  $\calT$ and a vertex not in $T$ has maximal valence (among those in
  $\calT$).
  Hence, in particular, the unique planar ({\em e.g.}~disc, annulus)
  minimal genus-surface is the triangle.  
\end{ex}

\begin{ex}\label{ex:mobius_strip}
  The unique decomposition of the (unique) $\matRP^2$ with
  6 vertices is
  $(\calT\setminus\st{v},\clst{v},\emptyset)$, where $v$ is any vertex of
  $\calT$.
  The genus-surface $\calT\setminus\st{v}$ is the M\"obius-strip with 5
  vertices, shown in Fig.~\ref{fig:moebius_strip}.
  \begin{figure}
    \centerline{\includegraphics{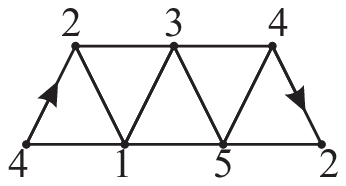}}
    \centerline{\small 123 124 135 245 345}
    \caption{The triangulation of the M\"obius-strip with 5 vertices.}
    \label{fig:moebius_strip}
  \end{figure}
\end{ex}

The following obvious remarks will be useful to make the search for
genus-surfaces of minimal decompositions faster.

\begin{rem}\label{rem:num_vert_gs}
  The inequality $V(\calG)\leqslant V(\calT)-1$ holds.
\end{rem}

\begin{rem}
  If $(\calG,\calD,\{\calD_1,\ldots,\calD_n\})$ and
  $(\calG',\calD',\{\calD'_1,\ldots,\calD'_{n'}\})$ are two decompositions of
  $\calT$ such that $\calG\subsetneq\calG'$, then
  $(\calG',\calD',\{\calD'_1,\ldots,\calD'_{n'}\})$ is not minimal.
\end{rem}

The following easy result will be also useful.

\begin{prop}\label{prop:vert-tria_D}
  The following inequalities hold:
  \begin{itemize}
  \item $T(\calD)\geqslant \mv(\calT)$,
  \item $V(\calD)\geqslant \mv(\calT)+1$.
  \end{itemize}
\end{prop}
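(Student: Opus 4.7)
The plan is to exploit the defining property that $\inter(\calD)$ contains a maximal-valence vertex $v$ of $\calT$, and to show that the entire closed star $\clst{v}$ must be contained in $\calD$. Once this containment is established, counting triangles and vertices of $\clst{v}$ gives both inequalities at once, since $v$ has valence $\mv(\calT)$ in $\calT$.

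First I would fix a vertex $v \in \inter(\calD)$ with $\val{v} = \mv(\calT)$, guaranteed by the third bullet of the definition of decomposition. Then I would argue that every triangle of $\calT$ containing $v$ lies in $\calD$. The reason is that such a triangle lies in some sub-triangulation of the decomposition (since $\calG \cup \calD \cup \calD_1 \cup \ldots \cup \calD_n = \calT$), and if it lay in any sub-triangulation $\calX$ other than $\calD$, then $\calX \cap \calD$ would contain the vertex $v$; but $\calX \cap \calD$ is either empty or a triangulated circle lying in $\partial\calD$, which cannot contain the interior vertex $v$. Therefore $\st{v} \subseteq \calD$, hence $\clst{v} \subseteq \calD$.

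Now the counting is immediate: $\clst{v}$ consists of $v$ together with its link (a triangulated circle with $\val{v}$ vertices and $\val{v}$ edges), so $V(\clst{v}) = \val{v} + 1$ and $T(\clst{v}) = \val{v}$. Since $\clst{v} \subseteq \calD$, we conclude
\[
T(\calD) \geqslant T(\clst{v}) = \val{v} = \mv(\calT)
\]
and
\[
V(\calD) \geqslant V(\clst{v}) = \val{v} + 1 = \mv(\calT)+1.
\]

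The only subtle step is the containment argument: one must carefully use the fact that the pairwise intersections of the sub-triangulations are either circles or empty, together with the fact that such circles must lie in the boundaries of the discs involved, to rule out $v$ appearing in any sub-triangulation other than $\calD$. Everything else is a routine count in the closed star of a vertex.
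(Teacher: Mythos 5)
Your proof is correct and takes essentially the same approach as the paper: fix a maximal-valence vertex $v\in\inter(\calD)$, observe that $\clst{v}\subseteq\calD$, and count. The only difference is that you spell out the containment argument (which the paper states without justification), and your justification is sound once one notes that any circle of intersection must consist of boundary edges of $\calD$.
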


\begin{proof}
  Note that the main disc $\calD$ contains a vertex $v\in\inter(\calD)$ with
  valence $\mv(\calT)$.
  Hence $\calD$ contains its closed star $\clst{v}$ and then at least
  $\mv(\calT)$ triangles and $\mv(\calT)+1$ vertices.
\end{proof}

\paragraph{Constructing the main disc}

It is easy to prove (so we leave it to the reader) that each main disc $\calD$
can be constructed from the closed star $\clst{v}$ of a maximal-valence vertex
$v\in\inter(\calD)$ by gluing repeatedly triangles along the boundary.
Such triangles can be glued in two ways: more precisely, along either one or
two edges of the boundary.
In the first case ({\em type}~I), the number of vertices in the boundary
increases by one, the number of vertices in the interior remains fixed, and
one 1-valent vertex in the boundary appears.
In the second case ({\em type}~II), the number of vertices in the boundary
decreases by one, the number of vertices in the interior increases by one, and
the valence of each vertex remaining in the boundary does not decrease.
See Fig.~\ref{fig:gluing_tria}.
\begin{figure}
\psfrag{D}{\small $\calD$}
\psfrag{I}{\small I}
\psfrag{II}{\small II}
  \centerline{\includegraphics{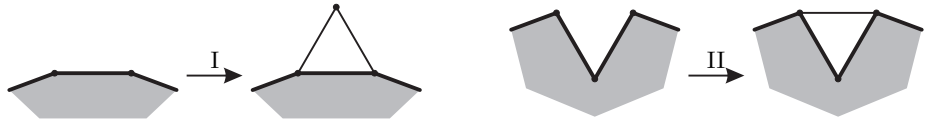}}
  \caption{Gluing triangles of type~I (on the left) and of type~II (on the right).}
  \label{fig:gluing_tria}
\end{figure}

\begin{rem}\label{rem:types_tria}
  Let us call $\nI$ (resp.~$\nII$) the number of triangles of type~I
  (resp. type~II) glued to obtain $\calD$.
  The following properties hold.
  \begin{enumerate}
  \item\label{pt:vbndD}
    We have $V(\partial\calD)=\mv(\calT)+\nI-\nII$.
  \item\label{pt:vintD}
    We have $V(\inter(\calD))=1+\nII$.
  \item\label{pt:nI_nII}
    If $\calT$ is a root, then the first triangle glued to $\clst{v}$ must be of
    type~I (because otherwise a 3-valent vertex in $\inter(\calD)$ would
    appear), and hence $\nII>0$ implies $\nI>0$.
    More precisely, we have ruled out the boundary of the tetrahedron (having
    3-valent vertices), but it has only one decomposition and we have
    $\nI=\nII=0$ for it, however.
  \end{enumerate}
  Note also that properties~\ref{pt:vbndD} and~\ref{pt:vintD} above
  imply that $\nI$ and $\nII$ are defined unambiguously, whatever the
  maximal-valence vertex $v$ in $\inter(\calD)$ and the order of the gluings are.
\end{rem}

This simple remark allows us to find other restrictions on genus-surfaces.
\begin{prop}\label{prop:lenght_bnd_condition}
  Suppose $\calT$ is a non-sphere root.
  Let us call~{\rm (A)} the condition
  ``$V(\calG)=V(\calT)-1$''
  and~{\rm (B)} the condition
  ``the maximal degree of a vertex in $\calG$ is achieved in
  $\partial\calG\cap\calD$.''
  Then, under the hypotheses made above, the number of vertices in the boundary
  of the main disc is bounded from below as described by the following
  table.
  
  \centerline{
  \begin{tabular}{|c|c|c|}
    \hline
    \phan (A) & (B) & $V(\partial\calD)\geqslant$ \\ \hline
    \phan true & true & $\md(\calG) + 1$ \\
    \phan true & false & $\md(\calG)$ \\
    \phan false & true & $\md(\calG)+3+V(\calG)-V(\calT)$. \\
    \phan false & false & $\md(\calG)+2+V(\calG)-V(\calT)$. \\ \hline
  \end{tabular}}
\end{prop}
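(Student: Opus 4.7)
The plan is to bound $V(\partial\calD)$ via the identity $V(\partial\calD)=\mv(\calT)+\nI-\nII$ of Remark~\ref{rem:types_tria}\ref{pt:vbndD}, and to estimate $\mv(\calT)$ from below through a vertex $w\in\calG$ realising $\md(\calG)$. Setting $s=\sum_{i=1}^{n} V(\inter(\calD_i))$, Remark~\ref{rem:types_tria}\ref{pt:vintD} yields $V(\calT)-V(\calG)=1+\nII+s$, so each of the four bounds in the table is equivalent to
$$
\mv(\calT)+\nI+s\geqslant\md(\calG)+k,
$$
with $k$ equal to $1,\ 0,\ 2,\ 1$ in the four rows (in order). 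I will treat the four rows through this reformulation.

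Because $\calG$ is a connected surface, each of its vertices lies on at most one boundary circle, so I split according to where $w$ sits: either $w\in\inter(\calG)$ (then $\val{w}=\md(\calG)$), or $w$ lies on the circle shared with a unique piece $P\in\{\calD,\calD_1,\ldots,\calD_n\}$, and then $\val{w}=\md(\calG)+{\rm deg}_P(w)-2\geqslant\md(\calG)$. In every case $\mv(\calT)\geqslant\md(\calG)$.

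For the two rows in which (A) holds we have $\nII=s=0$. The (B) false row is then immediate; for (A) and (B) both true, I would argue that either $\nI\geqslant 1$ already supplies the needed $+1$, or $\nI=0$, whence $\calD=\clst{v}$ and (B) forces $w\in\link{v}$. In this last case $v$ is a $\calD$-neighbour of $w$, so ${\rm deg}_\calD(w)\geqslant 3$ and $\mv(\calT)\geqslant\md(\calG)+1$. When (A) fails we have $\nII+s\geqslant 1$; the easy subcases are settled either by $\mv(\calT)\geqslant\md(\calG)+1$ (whenever ${\rm deg}_P(w)\geqslant 3$) or by producing $\nI+s\geqslant 1$ from $s\geqslant 1$ or from $\nII\geqslant 1$ combined with the implication $\nII>0\Rightarrow\nI>0$ of Remark~\ref{rem:types_tria}\ref{pt:nI_nII}.

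I expect the real obstacle to be the subcase (A) false, (B) true with ${\rm deg}_\calD(w)=2$: there we only have $\mv(\calT)\geqslant\md(\calG)$ and must still squeeze out $\nI+s\geqslant 2$. My plan is to exploit that ${\rm deg}_\calD(w)=2$ precludes $w\in\link{v}$, so $w$ must have been introduced by a type-I gluing (hence $\nI\geqslant 1$), and then to rule out $\nI=1$, $s=0$, $\nII\geqslant 1$ by invoking Remark~\ref{rem:tetra_3}. Indeed, if the unique type-I gluing is performed at an edge $pq$ of $\link{v}$, the only boundary vertices of $\calD$ with $\calD$-valence at least $3$ are $p$ and $q$, so every type-II gluing is forced to make one of $p$ or $q$ interior (otherwise the new interior vertex would have valence less than $4$ in $\calT$, contradicting that $\calT$ is a non-tetrahedron root). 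But such a type-II attaches a fresh $\calD$-edge at $w$, thereby contradicting ${\rm deg}_\calD(w)=2$. Hence $\nI\geqslant 2$, which gives $\nI+s\geqslant 2$ and closes the case.
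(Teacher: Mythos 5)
Your proof is correct and follows essentially the same approach as the paper's: both rely on $V(\partial\calD)=\mv(\calT)+\nI-\nII$, bound $\mv(\calT)$ from below via $\md(\calG)$ together with the $\calD$-degree of the vertex $w$ realising $\md(\calG)$, and resolve the hard sub-case (ruling out $\nI=1$, $\nII\geqslant 1$, $\deg_\calD(w)=2$) by the same mechanism, namely that the first type-II gluing would have to sit at one of the two $\link{v}$-endpoints of the sole type-I triangle and hence attach a fresh $\calD$-edge at $w$. Your reformulation through $s$ is a modest cleanup that merges the paper's separate treatment of $\nI=0$ and $\nI\geqslant 1$, but the substance is identical.
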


\begin{proof}
  As done above, let us call $\nI$ (resp.~$\nII$) the number of
  triangles of type~I (resp.~of type~II) glued to obtain $\calD$.

  Suppose that condition~(A) does hold.
  In such a case we have $V(\inter(\calD))=1$, hence, by
  applying Remark~\ref{rem:types_tria}.\ref{pt:vintD} and then
  Remark~\ref{rem:types_tria}.\ref{pt:vbndD}, we get
  $V(\partial\calD)=\mv(\calT)+\nI\geqslant \mv(\calT)$.
  If condition~(B) does not hold, we have done because
  $\mv(\calT)\geqslant\md(\calG)$.
  Suppose hence that condition~(B) does hold.
  Let us call $w$ a vertex in $\partial\calG\cap\calD$ with maximal degree
  in $\calG$.
  Now, the valence of $w$ in $\calD$ is either~1 or greater than~1.
  In the first case, we have $\nI>0$ and hence
  $V(\partial\calD) \geqslant \mv(\calT)+1 \geqslant \md(\calG)+1$.
  In the second case, we have $\mv(\calT) \geqslant \val{w} > \md(\calG)$ and
  hence the thesis.

  Suppose now that condition~(A) does not hold.
  In this case, by Remark~\ref{rem:num_vert_gs}, we have
  $V(\calG)<V(\calT)-1$, or equivalently $0\geqslant
  2+V(\calG)-V(\calT)$.
  By Remark~\ref{rem:types_tria}, we get that the number of vertices in
  $\partial\calD$ is $\mv(\calT)+\nI-\nII$ and that $\nII =
  V(\inter(\calD))-1\leqslant V(\calT)-V(\calG)-1$.
  Now we have two cases to be analyzed: either $\nI=0$ or $\nI>0$.
  
  We analyze the case where $\nI=0$ first.
  By Remark~\ref{rem:types_tria}.\ref{pt:nI_nII}, we have $\nII=0$ and hence
  $\calD$ is the closed star of a maximal-valence vertex of $\calT$.
  If now condition~(B) does not hold, we have 
  $V(\partial\calD) = \mv(\calT) \geqslant \md(\calG) \geqslant
  \md(\calG)+2+V(\calG)-V(\calT)$.
  On the contrary, if condition~(B) does hold, let us call $w$ a vertex
  in $\partial\calG\cap\calD$ with maximal degree in $\calG$; the degree of
  $w$ in $\calT$ is $\md(\calG)+1$, hence
  $V(\partial\calD) = \mv(\calT) \geqslant \val{w} = \md(\calG)+1 \geqslant
  \md(\calG)+3+V(\calG)-V(\calT)$.

  We are left to prove the thesis in the case where $\nI\geqslant
  1$.
  If condition~(B) does not hold, we have
  $V(\partial\calD) = \mv(\calT)+\nI-\nII \geqslant
  \md(\calG)+2+V(\calG)-V(\calT)$.
  Finally, suppose that condition~(B) does hold.
  Let us call $w$ a vertex in $\partial\calG\cap\calD$ with maximal degree
  in $\calG$.
  If $\nII=0$, we have
  $V(\partial\calD) = \mv(\calT)+\nI \geqslant
  \mv(\calT)+1 \geqslant \md(\calG)+3+V(\calG)-V(\calT)$.
  On the contrary, suppose $\nII>0$.
  We have two cases to be analyzed, in both of which we get the thesis.
  In fact,
  \begin{itemize}
  \item[-] if the valence of $w$ in $\calD$ is one, there are at least two
    triangles of type~I (for otherwise it would exist a 3-valent vertex in
    $\calT$), and hence
    $V(\partial\calD) = \mv(\calT)+\nI-\nII \geqslant
    \md(\calG)+3+V(\calG)-V(\calT)$;
  \item[-] if the valence of $w$ in $\calD$ is greater than one, we have
    $\mv(\calT)>\md(\calG)$, and hence
    $V(\partial\calD) = \mv(\calT)+\nI-\nII >
    \md(\calG)+2+V(\calG)-V(\calT)$.
  \end{itemize}
\end{proof}

\subsection{Restrictions on minimal genus-surfaces}
\label{sec:restrictions}

Minimal genus-surfaces satisfy many restrictions: we now describe some of
them.
Throughout this section, $(\calG,\calD,\{\calD_1,\ldots,\calD_n\})$ will
always denote a decomposition of a closed triangulated surface $\calT$.

\begin{prop}\label{prop:link_edge_bnd}
  If $(\calG,\calD,\{\calD_1,\ldots,\calD_n\})$ is minimal, the link of each
  edge of $\partial\calG$ is made up of a vertex also contained in
  $\partial\calG$.
\end{prop}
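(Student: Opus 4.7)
I would argue by contradiction: suppose the link (taken inside $\calG$) of some edge $e=\{u,v\}\in\partial\calG$ is a single vertex $w\in\inter(\calG)$, and construct a decomposition of $\calT$ whose genus-surface has one fewer triangle, contradicting minimality.

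The plan is to push the unique triangle $T=\{u,v,w\}$ of $\calG$ incident to $e$ across $e$ into whichever piece of the decomposition sits on the other side. Concretely, let $\calD^*$ be the sub-triangulation among $\calD,\calD_1,\ldots,\calD_n$ that contains the other triangle of $\calT$ incident to $e$ (such a piece exists and is unique, since every edge of $\calT$ belongs to exactly two triangles, one of which is $T\in\calG$). Set $\calG':=\calG\setminus\{T,e\}$ and $\tilde\calD^{*}:=\calD^{*}\cup T$, leaving the other pieces untouched. I would then claim that the resulting triple is again a decomposition of $\calT$.

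The verification breaks into three checks that I would carry out in turn. First, $\calG'$ is a surface-with-boundary sub-triangulation: removing the triangle $T$ and the free edge $e$ only alters the link of $w$, whose link in $\calG$ was a circle containing the edge $\{u,v\}$ and becomes an arc from $u$ to $v$ (so $w$ joins $\partial\calG'$); the links of $u$ and $v$ shorten but remain arcs, and no vertex becomes isolated because the interior edges $\{u,w\}$ and $\{v,w\}$ persist. Second, $\tilde\calD^{*}$ is a disc: the key observation here—and the one that makes the whole argument go—is that $w\in\inter(\calG)$ forces every triangle of $\calT$ incident to $w$ to lie in $\calG$, so $w$ and the two edges $\{u,w\}$, $\{v,w\}$ are absent from every $\calD_i$ and from $\calD$; therefore $T\cap\calD^{*}=e$, and attaching a triangle along a single boundary edge of a disc yields a disc. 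Third, the pairwise intersections remain circles or empty: $\calG'\cap\tilde\calD^{*}$ is obtained from the old circle $\calG\cap\calD^{*}$ by replacing the edge $e$ with the arc $u$–$w$–$v$, hence is still a simplicial circle; intersections between $\calG'$ (resp.\ $\tilde\calD^*$) and the remaining pieces are unchanged, again because $w$, $\{u,w\}$, $\{v,w\}$ are absent from those pieces and $e$ sat only in $\calG$ and $\calD^{*}$. Finally, if $\calD^{*}=\calD$ then the main disc merely grows, preserving the max-valence-vertex condition in its interior; if $\calD^{*}$ is one of the $\calD_i$, then $\calD$ is untouched.

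Having produced a valid decomposition with $T(\calG')=T(\calG)-1$, we contradict the minimality of $(\calG,\calD,\{\calD_1,\ldots,\calD_n\})$. The main obstacle I would expect is bookkeeping in the third check—making sure the circle $\calG\cap\calD^{*}$ genuinely contains $e$ (so that replacing $e$ by the arc through $w$ again produces a circle) and that no other piece $\calD_i$ secretly contains $e$, $w$, or the two edges $\{u,w\},\{v,w\}$; both points reduce cleanly to the hypothesis $w\in\inter(\calG)$ together with the fact that each edge of $\calT$ is in exactly two triangles.
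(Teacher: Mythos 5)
Your proof is correct and follows essentially the same route as the paper's: you move the unique triangle $T$ of $\calG$ incident to $e$ (the one containing the interior vertex $w$) across $e$ into the adjacent disc, obtaining a decomposition whose genus-surface has one fewer triangle. The paper states this in one sentence and leaves the verification implicit; you supply the bookkeeping — that $\calG'$ remains a surface because $\{u,w\},\{v,w\}$ are interior edges and the link of $w$ degrades from a circle to an arc, that $w\notin\calD^{*}$ because $w\in\inter(\calG)$ forces all triangles at $w$ into $\calG$ so $\tilde\calD^{*}$ is a disc, and that the intersection circle is merely rerouted through $w$ — all of which is sound.
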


\begin{proof}
  First of all, we note that the link of an edge contained in the boundary of a
  surface contains exactly one vertex.
  Now, suppose by contradiction that $e\subset\partial\calG$ is an edge
  such that $\link{e}=\{v\}$, where $v\in\inter(\calG)$.
  If we remove the triangle containing $e$ and $v$ from $\calG$ and add it to
  the disc (either $\calD$ or $\calD_i$, for some $i\in\{1,\ldots,n\}$) to which it is adjacent (in $\calT$),
  we get a decomposition of $\calT$ whose genus-surface has one triangle less
  than $\calG$: {\em i.e.}~a contradiction to the hypothesis that
  $(\calG,\calD,\{\calD_1,\ldots,\calD_n\})$ is minimal.
\end{proof}

\begin{prop}
  If $(\calG,\calD,\{\calD_1,\ldots,\calD_n\})$ is minimal, each triangle in
  $\calG$ intersects the boundary of $\calG$.
\end{prop}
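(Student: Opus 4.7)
The plan is to argue by contradiction in exactly the spirit of Proposition~\ref{prop:link_edge_bnd}: assuming that some triangle $T$ of $\calG$ fails to meet $\partial\calG$, I would produce a decomposition of $\calT$ whose genus-surface has strictly fewer triangles, contradicting minimality.

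First I would observe that, under the standing assumption $T\cap\partial\calG=\emptyset$, all three vertices of $T$ lie in $\inter(\calG)$; hence its three edges lie in $\inter(\calG)$ too (an edge of $\partial\calG$ has both endpoints on $\partial\calG$), and the whole closed triangle $T$ sits inside $\inter(\calG)$. Because each $\calG\cap\calD$ and each $\calG\cap\calD_i$ is either empty or a circle contained in $\partial\calG$, this already forces $T\cap\calD=\emptyset$ and $T\cap\calD_i=\emptyset$ for every $i$.

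Next I would remove the top-dimensional simplex $T$ from $\calG$: set $\calG':=\calG\setminus\{T\}$, retaining the three edges and three vertices of $T$. A short link check shows that $\calG'$ is a triangulated surface whose boundary is $\partial\calG\cup\partial T$: at each vertex of $T$ the circular link in $\calG$ loses one edge and becomes an arc, while at each edge of $T$ the two-vertex link shrinks to a single vertex. The single triangle $T$ is itself a triangulated disc. I would then claim that $(\calG',\calD,\{\calD_1,\ldots,\calD_n,T\})$ is a decomposition of $\calT$: the union is unchanged, the main disc $\calD$ still carries a maximal-valence vertex of $\calT$ in its interior, the new intersection $\calG'\cap T$ equals $\partial T$ (a circle of length $3$), and the new disc $T$ is disjoint from each $\calD$ and $\calD_i$ by the first paragraph, while the remaining pairwise intersections are inherited from the original decomposition. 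Since $T(\calG')=T(\calG)-1$, this contradicts minimality.

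The only mildly delicate point is the link verification for $\calG'$, together with the implicit need to rule out that $\partial T$ also bounds a second triangle of $\calG$ (which would alter the link computation when $T$ is removed). The latter is automatic in a simplicial complex, since a $2$-simplex is determined by its vertex set, so no second triangle of $\calT$ shares the three vertices of $T$, and $\partial T$ genuinely becomes a new boundary circle of $\calG'$ after the removal.
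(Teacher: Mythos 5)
Your argument is correct and is essentially the paper's own proof: remove the offending triangle $T$ from $\calG$ and adjoin it to the family of discs as a new one-triangle disc, producing a decomposition with a smaller genus-surface and contradicting minimality. The extra verifications you carry out (that the three vertices and edges of $T$ lie in $\inter(\calG)$, that $\calG\setminus\{T\}$ is a triangulated surface with new boundary circle $\partial T$, and that the pairwise-intersection condition is preserved) are details the paper leaves implicit, but the underlying idea is the same.
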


\begin{proof}
  Suppose by contradiction that a triangle (say $T$) of $\calG$ does not
  intersect the boundary of $\calG$.
  Then $(\calG\setminus\{T\},\calD,\{\calD_1,\ldots,\calD_n,\{T\}\})$
  is a decomposition of $\calT$ whose genus-surface $\calG\setminus\{T\}$ has
  one triangle less than $\calG$: {\em i.e.}~a contradiction to the hypothesis
  that $(\calG,\calD,\{\calD_1,\ldots,\calD_n\})$ is minimal.
\end{proof}

\begin{prop}
  If $(\calG,\calD,\{\calD_1,\ldots,\calD_n\})$ is minimal, then $\calG$
  contains no pair of triangles adjacent to each other along an edge and
  adjacent along edges to different boundary components of $\calG$ (see
  Fig.~{\rm \ref{fig:no_pair}}-left).
  \begin{figure}
\psfrag{C1bS}{\small $C_1\subset\partial\calG$}
\psfrag{C2bS}{\small $C_2\subset\partial\calG$}
\psfrag{G}{\small $\calG$}
\psfrag{Gp}{\small $\calG'$}
    \centerline{\includegraphics{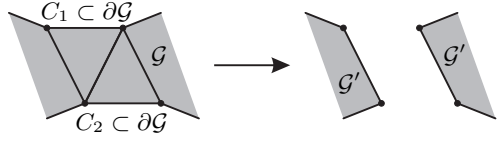}}
    \caption{Forbidden configuration if $C_1$ and $C_2$ are different boundary
      components of $\calG$ (on the left), because of the existence of a
      genus-surface $\calG'$ with fewer triangles than $\calG$ (on the right).}
    \label{fig:no_pair}
  \end{figure}
\end{prop}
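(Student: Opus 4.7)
The plan is to argue by contradiction, in the same spirit as the two previous propositions in this subsection: assume the forbidden pair $T_1, T_2$ exists, with $T_i$ having an edge on the boundary component $C_i$ and $C_1 \neq C_2$, and then exhibit a decomposition of $\calT$ whose genus-surface has strictly fewer triangles than $\calG$.

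The construction would go as follows. Since each boundary component of $\calG$ bounds a unique disc among $\calD, \calD_1, \ldots, \calD_n$, I let $D_1, D_2$ denote the (necessarily distinct) discs with $\partial D_i = C_i$. Then I form the new sub-triangulations $\calG' = \calG \setminus \{T_1, T_2\}$ and $D' = D_1 \cup T_1 \cup T_2 \cup D_2$. The new decomposition is obtained by replacing $\calG, D_1, D_2$ in the original triple by $\calG'$ and $D'$. If either $D_1$ or $D_2$ was the main disc $\calD$, then $D'$ takes over as the new main disc, and its interior still contains a maximal-valence vertex of $\calT$ since $\calD \subset D'$; otherwise $\calD$ remains the main disc and $D'$ enters the list of secondary discs. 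Since $T(\calG') = T(\calG) - 2 < T(\calG)$, this contradicts the minimality of the original decomposition.

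The main obstacle is to verify that $D'$ is indeed a triangulated disc. I would do this by a three-stage argument in which each stage glues two discs along a single boundary edge: the strip $T_1 \cup T_2$ is a disc, gluing $D_1$ to it along the edge of $T_1$ lying in $C_1$ produces a disc, and gluing $D_2$ to the result along the edge of $T_2$ lying in $C_2$ again produces a disc. One must also check that the pairwise intersection conditions required by the definition of a decomposition are preserved. The only nontrivial change is that $\calG' \cap D'$ is the circle obtained from $C_1 \cup C_2$ by removing the two edges of $\partial\calG$ occupied by $T_1, T_2$ and inserting the remaining external edges of $T_1 \cup T_2$; all other pairwise intersections are either unchanged or pass through the new main disc. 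The axiomatic requirement $\calG \cap D_i = C_i$ is what rules out potential degeneracies (for example, an extra vertex of $T_i$ landing in $D_i$ outside its prescribed edge), ensuring that the gluings really are along single edges and that $D'$ is a genuine disc.
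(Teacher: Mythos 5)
Your proof takes essentially the same approach as the paper: remove the two adjacent triangles from $\calG$ and absorb them, together with the two (necessarily distinct) discs they touch, into a single new disc, producing a decomposition whose genus-surface has two fewer triangles. The paper states this in one sentence; you correctly fill in the verifications the paper leaves implicit (that $D'$ is a triangulated disc via single-edge gluings, that the pairwise intersection conditions persist, and that the main-disc property is preserved).
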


\begin{proof}
  Suppose by contradiction that such a pair exists.
  If we remove these two triangles from $\calG$ and glue (by adding the two
  triangles) the two discs (different by hypothesis) adjacent to $\calG$ along the two boundary
  components adjacent to the two triangles, we get a decomposition of $\calT$
  whose genus-surface $\calG'$ has two triangles less than $\calG$ (see
  Fig.~\ref{fig:no_pair}-right): {\em i.e.}~a contradiction to the hypothesis
  that $(\calG,\calD,\{\calD_1,\ldots,\calD_n\})$ is minimal.
\end{proof}

\begin{rem}
In the proof of the proposition above we need the boundary components (adjacent to the pair of triangles) to be different; for otherwise the operation of {\em removing and gluing} would lead to an annulus in the complement of the genus-surface.
Note that the two boundary components may be the same, see for instance Example~\ref{ex:mobius_strip}.
\end{rem}

\begin{prop}\label{prop:deg_val_vert_S}
  Suppose $\calT$ is a non-sphere root and
  $(\calG,\calD,\{\calD_1,\ldots,\calD_n\})$ is minimal.
  Then, for each vertex $v$ of $\calG$, the following inequalities hold:
  \begin{itemize}
  \item $3 \leqslant \degG{v} \leqslant V(\calT)-2$ if $v\in\partial\calG$,
  \item $4 \leqslant \degG{v} \leqslant V(\calT)-2$ if $v\in\inter(\calG)$;
  \end{itemize}
  or equivalently:
  \begin{itemize}
  \item $2 \leqslant \valG{v} \leqslant V(\calT)-3$ if $v\in\partial\calG$,
  \item $4 \leqslant \valG{v} \leqslant V(\calT)-2$ if $v\in\inter(\calG)$.
  \end{itemize}
  Moreover, there exists at least one vertex $w$ in $\partial\calG$ with
  $\degG{w} \geqslant 4$ (or equivalently $\valG{w} \geqslant 3$).
\end{prop}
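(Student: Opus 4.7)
The plan is to prove the four valence inequalities in turn and then the existence claim; the degree versions follow from $\degG{v}=\valG{v}$ for $v\in\inter(\calG)$ and $\degG{v}=\valG{v}+1$ for $v\in\partial\calG$.

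The upper bounds and the interior lower bound are short. Remark~\ref{rem:num_vert_gs} gives $V(\calG)\leqslant V(\calT)-1$, so $\degG{v}\leqslant V(\calG)-1\leqslant V(\calT)-2$ for every vertex of $\calG$. For the interior lower bound, if $v\in\inter(\calG)$ then the open star of $v$ in $\calT$ is contained in $\calG$ (the discs of the decomposition meet $\calG$ only along $\partial\calG$), so $\valG{v}=\val{v}$; since $\calT$ is a non-sphere root it is not the boundary of the tetrahedron, and Remark~\ref{rem:tetra_3} then rules out $3$-valent vertices, forcing $\val{v}\geqslant 4$.

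For $\valG{v}\geqslant 2$ when $v\in\partial\calG$, I would argue by contradiction: if $\valG{v}=1$, let $T=vab$ be the unique triangle at $v$, so $va,vb\in\partial\calG$. If $ab\in\inter(\calG)$, the idea is to strip $T$ from $\calG$ and glue it to the unique disc of the decomposition meeting $v$ (unique because the arc of $\link{v}$ lying outside $\calG$ is connected); the resulting triple is still a decomposition but has one fewer triangle in its genus-surface, contradicting minimality. The subtle point in this subcase is to verify that the modified pairwise intersection remains a circle and that the main-disc property is preserved. If instead $ab\in\partial\calG$, then every edge of $T$ is a boundary edge of $\calG$ and no other triangle of $\calG$ shares an edge with $T$; combined with the manifold structure at $a$ and $b$ (whose links in $\calG$ are intervals beginning with an edge contributed by $T$), this forces $\calG=\{T\}$, hence planar, and Example~\ref{ex:sphere_dec} then yields that $\calT$ is a sphere, contradicting the hypothesis.

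For the existence of $w\in\partial\calG$ with $\valG{w}\geqslant 3$, suppose for contradiction that every boundary vertex has valence $2$. At any $v\in\partial\calG$ the two triangles at $v$ share an interior edge $vb$; iterating the same local analysis at the successive vertices of a boundary cycle shows that the triangles encountered all share the \emph{same} apex $b$, producing a cyclic fan $v_1v_2b,v_2v_3b,\ldots,v_kv_1b$. The link of $b$ then already contains a full circle, so $b$ must be interior with link equal to that circle; connectedness of $\calG$ then forces $\calG$ to coincide with the fan, hence to be a disc, hence planar, and Example~\ref{ex:sphere_dec} forces $\calT$ to be a sphere, contradicting the hypothesis. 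I expect this last step to be the main obstacle, since ruling out extra triangles beyond the fan and additional boundary components of $\calG$ requires some care with the manifold condition at $b$ and the connectedness of $\calG$.
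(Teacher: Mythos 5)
Your arguments for the upper bounds, the interior lower bound, and the case $\valG{v}\geqslant 2$ for $v\in\partial\calG$ match the paper's proof essentially step for step (the paper compresses your two subcases for $\valG{v}\geqslant 2$ by noting directly that $\calG$ is not a disc, but the content is the same). For the last assertion, however, you take a genuinely different route. The paper picks a single boundary vertex $w$ with its two triangles $T_1,T_2$, lets $v_0$ be the third vertex of the interior edge at $w$, and invokes Proposition~\ref{prop:link_edge_bnd} (a minimality consequence) to force $v_0\in\partial\calG$; then $\valG{v_0}=2$ pins $v_0$ to $T_1,T_2$ alone, so $\calG=T_1\cup T_2$ is a two-triangle disc, and the contradiction follows. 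You instead propagate the local picture around the whole boundary cycle to build a cone with a single apex $b$, argue that $\link{b}$ already contains the boundary circle so $b$ is interior, and then use the manifold condition and connectedness to identify $\calG$ with the fan. Your route is self-contained at this step (it does not invoke Proposition~\ref{prop:link_edge_bnd}) and it proves a slightly more general combinatorial fact, namely that any connected triangulated surface-with-boundary all of whose boundary vertices have valence $2$ is a cone over its boundary and hence a disc; the price is that you must take care of the points you flag (same apex at every vertex, $b\notin\{v_1,\dots,v_k\}$, no triangles outside the fan, only one boundary component), none of which is difficult but all of which take a few lines. The paper's use of Proposition~\ref{prop:link_edge_bnd} short-circuits all of this and gets the contradiction after examining only two triangles, which is why it is preferred there. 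Both arguments are correct.
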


\begin{proof}
  Since
  $\valG{v}=
  \left\{\begin{small}
      \begin{array}{ll}
        \degG{v}-1 & {\rm if}\ v\in\partial\calG \\
        \degG{v} & {\rm if}\ v\in\inter(\calG)
      \end{array}
    \end{small}\right.$,
  it is enough to prove the following inequalities:
  \begin{list}{({\rm \alph{listi}})}{\usecounter{listi} \setlength{\labelwidth}{1cm}}
  \item $\degG{v} \leqslant V(\calT)-2$ for each $v\in\calG$,
  \item $4 \leqslant \degG{v}$ if $v\in\inter(\calG)$,
  \item $2 \leqslant \valG{v}$ if $v\in\partial\calG$,
  \item $\valG{w} \geqslant 3$ for at least one vertex $w\in\partial\calG$.
  \end{list}

  Inequality~(a) is obvious because $\degG{v}+1\leqslant V(\calG)\leqslant V(\calT)-1$ by
  Remark~\ref{rem:num_vert_gs}.
  Inequality~(b) is also obvious by Remark~\ref{rem:tetra_3}, because $\calT$
  is a non-sphere root.

  We will now prove inequality~(c).
  Suppose by contradiction that $\valG{v}=1$ for a vertex
  $v\in\partial\calG$.
  Hence, there is exactly one triangle $T\subset\calG$ such that $v\in
  T$.
  The two edges of $T$ incident to $v$ belong to $\partial\calG$;
  therefore, the third edge of $T$ does not belong to $\partial\calG$ because
  $\calG$ is not a disc, and we can remove the triangle $T$ from $\calG$
  adding it to the disc (either $\calD$ or $\calD_i$, for some
  $i\in\{1,\ldots,n\}$) to which it is adjacent (in $\calT$).
  We have got a decomposition of $\calT$ whose genus-surface has one triangle
  less than $\calG$: {\em i.e.}~a contradiction to the hypothesis that
  $(\calG,\calD,\{\calD_1,\ldots,\calD_n\})$ is minimal.

  Let us finally prove inequality~(d).
  Suppose by contradiction that $\valG{v}<3$ for each vertex
  $v\in\partial\calG$.
  By inequality~(c), we have $\valG{v}=2$ for each vertex
  $v\in\partial\calG$.
  Let us consider one of these vertices (say $w$).
  It is contained in two triangles (say $T_1$ and $T_2$), and it is adjacent
  to three vertices (call $v_0$ the one contained in $T_1\cap T_2$ and $v_i$
  the one contained in $T_i$ only, for $i=1,2$).
  By Proposition~\ref{prop:link_edge_bnd}, we have $v_0\in\partial\calG$
  and hence $\valG{v_0}=2$.
  Therefore, $v_0$ is contained in $T_1$ and $T_2$ only.
  This implies that $\calG = T_1 \cup T_2$ and hence that $\calG$ is a disc,
  contradicting the hypothesis that $\calT$ is not a sphere.
\end{proof}

\section{Listing closed triangulated surfaces}

In this section, we will apply the theory of roots and decompositions
to find an algorithm to list
all triangulations of closed surfaces with at most $n$ vertices.
Then we will specialize it (by specializing the theory described above) for the
case where $n=11$.
In fact, we will see that minimal decompositions of closed triangulated surfaces with at most 11
vertices satisfy some stronger theoretical restrictions.
Closed triangulated surfaces with at most $12$ vertices has been independently listed
by Lutz and Sulanke~\cite{Lutz-Sulanke:12vert} using a very subtle lexicographic
enumeration approach.
The computer program they have written to implement their algorithm is very fast.
Perhaps an algorithm mixing their technique and ours may be even faster.

\subsection{The listing algorithm}\label{sec:listing_algorithm}

First of all, we recall that we have seen in Section~\ref{sec:roots} that each
closed triangulated surface (say $\calT$) has exactly one root (Theorem~\ref{teo:root}).
Hence, in order to list closed triangulated surfaces, it is enough to list first roots and
then non-roots (deducing the latter ones from the former ones).
Moreover, we have seen in Section~\ref{sec:genussurfs} that each closed triangulated surface
has a (minimal) decomposition, say $(\calG,\calD,\{\calD_1,\ldots,\calD_n\})$.
Hence, we can start listing triangulated discs and genus-surfaces, and then we can
glue them to get all closed triangulated surfaces.
When the triangulated surface is a root and the decomposition is minimal, the
latter one
satisfies some theoretical restrictions which simplify the search.
However, there is a drawback slowing down the search: such a decomposition is
not unique.

\paragraph{Classical listing technique}
The basic technique we use to list triangulations is the classical one.
We start from the closed star of a maximal-valence vertex $v$ and we
repeatedly glue triangles.
Each time, we choose an edge in the boundary and we glue a triangle along that
edge.
In order to do this, we only need to choose the third vertex of the triangle:
this vertex can be either a vertex of the current boundary or a new one.
For each choice we create a new triangulation and we repeat the procedure for
it.
If, at some time, we violate some property which must be satisfied
({\em e.g.}~the first vertex is maximal-valent, the link of each vertex is
always contained in a circle), we go back trying to glue other triangles.
This technique has been described in details by Lutz in~\cite{Lutz:10vert}, and
by Lutz and Sulanke in~\cite{Lutz-Sulanke:12vert}.

\paragraph{Re-labeling}
In order to avoid duplicates, each time we find a triangulated surface $\calT$,
we check whether $\calT$ has been already found, changing the labeling to get
the mixed-lexicographically smallest one.
A list of triangles is {\em mixed-lexicographically smaller} than another one
if the first vertex has greater valence or, when the first vertices have the
same valence, the list is smaller in the lexicographic order.

In a mixed-lexicographically minimal triangulation, the list of triangles
starts with the star of vertex~1:
$$123\ \ 124\ \ 135\ \ 146\ \ 157\ \ \dots\ \ 1(d-1)(d+1),$$
where $d=\degree{1}$.
Now, there are two possibilities:
\begin{itemize}
\item if vertex~1 is in the interior of $\calT$, then its link must be a
  circle and hence the next triangle is $1d(d+1)$;
\item if otherwise vertex~1 is in the boundary of $\calT$, then its link is an
  interval and hence no more triangles containing vertex~1 appear in the
  list.
\end{itemize}
Then the list continues with the remaining triangles, not containing
vertex~1.

Note that, when we list a class of triangulations following the
mixed-lexico\-graphic order, the sub-class of triangulations with the same
maximal valence (say $m$) are sorted lexicographically, and every
triangulation in such a sub-class begins with the same $m$ triangles.
But note that the list of all triangulations does not follow the lexicographic
order.

In order to find the mixed-lexicographically smallest labeling, we carry out the
following steps:
\begin{itemize}
\item we list all maximal-valence vertices of $\calT$;
\item for each such vertex (say $v$) we list the vertices in $\link{v}$;
\item for each pair $(v,w)$, where $w\in\link{v}$, we re-label $v$ as $1$ and
  $w$ as $2$;
\item we re-label the two vertices in the link of the edge $vw$ to
  be $3$ and $4$ (we have two choices);
\item we extend the new labeling in a lexicographically smallest way (sometimes we
  have two choices as in the previous step);
\item we search, among all such pairs $(v,w)$, for the
  mixed-lexicograph\-ically smallest labeling.
\end{itemize}

\paragraph{The listing algorithm}
The algorithm is made up of 5 steps.
Let $n$ be the maximal number of vertices of the closed triangulated surfaces
we are searching for.
\begin{itemize}
\item[1.]
{\em Triangulated discs}

The list of triangulated discs can be achieved by applying the classical technique
described above.
Obviously, we want to list roots, hence we discard triangulated discs with 3-valent
vertices in their interior.
Listing of triangulated discs with at most $n$ vertices (with $n$ small)
is fast and well known, hence we do not describe this step.
We have essentially used the same technique of Step 3 below.

\item[2.]
{\em Triangulated spheres}

Minimal decompositions of triangulated spheres are of type
$(\{T\},\calT\setminus\{T\},\emptyset)$, where $T$ is a triangle of
$\calT$ and a vertex not in $T$ has maximal valence (among those in $\calT$),
see Example~\ref{ex:sphere_dec}.
Obviously, we have $V(\partial(\calT\setminus\{T\}))=3$, hence, in order to list
triangulated spheres, we pick out the triangulated discs $\calD$ such that
$V(\partial\calD)=3$ and we glue the missing triangle to each of them.
Moreover, each main disc has a maximal-valence vertex in its interior, so we
discard all triangulated spheres with maximal valence greater than this
number.
Finally, we note that we need to re-label each triangulated sphere found to
check that it has not been already found.

\item[3.]
{\em ``Minimal'' genus-surfaces}

In order to list ``minimal'' genus-surfaces, we follow the same classical technique, but
we have some restrictions that minimal genus-surfaces of roots must fulfill (see, for
instance, the results of Section~\ref{sec:roots} and~\ref{sec:genussurfs}),
hence we can discard those not fulfilling these restrictions.
Note that we will not know whether all genus-surfaces found are actually
minimal: we know only that they fulfill some restrictions necessary to be
minimal and that all minimal genus-surfaces are found.
Moreover, the number of the genus-surfaces we will found may be greater than that of
closed triangulated surfaces/roots, but this search has the advantage of dealing with a lower
number of vertices (at most $n-1$) and triangles, being necessary to construct
genus-surfaces instead of closed triangulated surfaces (see Remark~\ref{rem:num_vert_gs} and
Proposition~\ref{prop:vert-tria_D}).
Finally, we note that, as above, we need to re-label each genus-surface
found to check that it has not been already found.

\item[4.]
{\em Gluings}

In order to get roots from genus-surfaces, we glue the triangulated discs found to each
genus-surface found (along its boundary components).
One of such discs is a main disc, hence it contains a maximal-valence vertex in its interior.
Note that we must check all possible gluings between the genus-surface and
the triangulated disc(s): the number of the possible gluings of each disc is twice the
number of its boundary vertices.
Note that we need to check that the result of the gluing is a triangulated
surface, which essentially means that we must check that each pair of
adjacent vertices of the genus-surface is not adjacent in the triangulated discs.
Moreover, each main disc has a maximal-valence vertex in its interior, so we
discard the triangulations with maximal valence greater than this number.
Note also that, since we are searching for roots, we discard the
triangulations with a 3-valent vertex.
Finally, we note that, as above, we need to re-label each root found to check
that it has not been already found.

\item[5.]
{\em Non-roots}

We know that non-roots can be divided depending on their root (see
Remark~\ref{rem:sub-classes}).
Hence, we start from each root (with at most $n-1$ vertices) and we list
the non-roots having that root.
The search is quite simple: in order to get all non-roots from a root, it is
enough to repeatedly apply \T-moves.
Obviously, we need to re-label each non-root found to check that it has not
been already found, but the search can be restricted to those having the same
root only.
\end{itemize}

\subsection{Listing closed triangulated surfaces with at most 11 vertices}

If the maximal number of vertices of the triangulations we are searching for
is at most~11, we can make the listing algorithm faster, because in this case
minimal decompositions of roots fulfill restrictions stronger than in the
general case.
Throughout this section, $(\calG,\calD,\{\calD_1,\ldots,\calD_n\})$
will always denote a decomposition of a closed triangulated surface $\calT$.
Recall that, by Remark~\ref{rem:num_vert_gs}, if $V(\calT)\leqslant 11$ then
$V(\calG)\leqslant 10$.

We have already noted that we have no useful restriction {\em a~priori} on the
number of discs in $\calT\setminus\calG$.
If instead $V(\calT)\leqslant 11$, we have the following result.

\begin{prop}\label{prop:2comp}
  If $V(\calT)\leqslant 11$, the number of boundary components of $\calG$
  is~$1$ or~$2$.
\end{prop}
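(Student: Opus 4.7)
The plan is a single vertex count that works uniformly, without any split on whether $\calT$ is a sphere.

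Let $k=n$, so that $\calG$ has exactly $k+1$ boundary components, one contributed by each of $\calD,\calD_1,\ldots,\calD_n$; the goal is to show $k\leqslant 1$. First I bound $V(\partial\calG)$ from below. Every $\calD_i$ is a triangulated disc, hence its boundary is a triangulated circle and $V(\partial\calD_i)\geqslant 3$; since the $k+1$ boundary circles are pairwise disjoint sub-triangulations of $\partial\calG$, this yields $V(\partial\calG)\geqslant V(\partial\calD)+3k$. Combining this with $V(\calG)\geqslant V(\partial\calG)$ and the vertex partition $V(\calT)=V(\calG)+V(\inter(\calD))+\sum_{i}V(\inter(\calD_i))\geqslant V(\calG)+V(\inter(\calD))$, I collect terms to get $V(\calT)\geqslant V(\calD)+3k$. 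Proposition~\ref{prop:vert-tria_D} then supplies $V(\calD)\geqslant\mv(\calT)+1$, giving the master inequality $V(\calT)\geqslant\mv(\calT)+1+3k$.

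To finish, it suffices to verify $\mv(\calT)\geqslant V(\calT)-6$ whenever $V(\calT)\leqslant 11$, since then $3k\leqslant V(\calT)-\mv(\calT)-1\leqslant 5$ forces $k\leqslant 1$. For $V(\calT)\leqslant 8$ this is trivial, because every vertex of a closed triangulated surface has valence at least $3$ (its link is a triangulated circle, cf.~Remark~\ref{rem:small_tria}). For $V(\calT)\in\{9,10,11\}$ I invoke the Euler-characteristic identity $\sum_{v}\val{v}=3T(\calT)=6V(\calT)-6\chi(\calT)\geqslant 6V(\calT)-12$, from which the average vertex-valence exceeds $4$, forcing some vertex to have valence at least $5$ and hence $\mv(\calT)\geqslant 5\geqslant V(\calT)-6$.

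The only genuine obstacle is pinning down the numerical lower bound on $\mv(\calT)$; everything else is direct bookkeeping assembled from Proposition~\ref{prop:vert-tria_D} and the vertex partition induced by the decomposition. Note in particular that no appeal to minimality of the decomposition is needed — the counting goes through as stated.
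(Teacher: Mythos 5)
Your proof is correct, and the underlying counting is the same as in the paper's own argument: bound the vertices not in $\calD$ by $V(\calT)-V(\calD)\leqslant V(\calT)-\mv(\calT)-1$ (using Proposition~\ref{prop:vert-tria_D}) and observe that every boundary circle of $\calG$ beyond the one closed off by $\calD$ costs at least $3$ additional vertices. The genuine difference is how the lower bound on $\mv(\calT)$ is obtained. The paper splits on whether $\mv(\calT)>4$: when $\mv(\calT)\leqslant 4$ it enumerates the three triangulations with this property (tetrahedron, tetrahedron plus one \T-move, octahedron), concluding $V(\calG)\leqslant 5$; when $\mv(\calT)\geqslant 5$ it runs the counting. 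You eliminate the case split by invoking the identity $\sum_v\val{v}=3T(\calT)=6V(\calT)-6\chi(\calT)\geqslant 6V(\calT)-12$, which forces $\mv(\calT)\geqslant 5$ as soon as $V(\calT)\geqslant 7$, while the trivial $\mv(\calT)\geqslant 3$ handles $V(\calT)\leqslant 9$. This yields the cleaner, uniform inequality $3n\leqslant V(\calT)-\mv(\calT)-1\leqslant 5$ directly. Two minor remarks: (i) you write ``for $V(\calT)\leqslant 8$ this is trivial,'' but the trivial bound $\mv(\calT)\geqslant 3$ already covers $V(\calT)\leqslant 9$; your argument still goes through because you also treat $V(\calT)=9$ in the Euler-characteristic case. (ii) You equate the number of boundary components of $\calG$ with $n+1$; the paper instead counts only those components of $\partial\calG$ lying outside $\calD$, which sidesteps having to justify that identification. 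Your identification is nonetheless sound, since two of the discs in the decomposition cannot share a boundary circle (their union would be a closed subsurface of the connected surface $\calT$), and a single boundary circle of $\calG$ cannot be split across two disjoint disc boundaries.
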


\begin{proof}
  By definition, $\calG$ has non-empty boundary, hence it is enough to prove
  that $\partial\calG$ has at most 2 components.
  The main ingredient of the proof is that each component must contain at
  least 3 vertices because it is a triangulated circle.

  Let we firstly suppose that $\mv(\calT)\leqslant 4$.
  By Remark~\ref{rem:maxval_4}, the root of $\calT$ is the boundary of the
  tetrahedron or the boundary of the octahedron.
  In order to get $\calT$ from them, we must apply \T-moves.
  It is very easy to prove that only 3 possibilities for $\calT$ arise: the
  two roots and one non-root with 5 vertices.
  By Remark~\ref{rem:num_vert_gs}, we have $V(\calG) \leqslant V(\calT)-1
  \leqslant 5$, and hence $\calG$ has at most one boundary component.

  Suppose now that $\mv(\calT)>4$.
  By Proposition~\ref{prop:vert-tria_D}, we have $V(\calD) \geqslant
  \mv(\calT)+1 \geqslant 6$ and then $V(\partial\calG\setminus\calD) \leqslant
  V(\calT)-V(\calD) \leqslant 5$.
  Hence there cannot be more than 2 components in $\partial\calG$, for
  otherwise $\partial\calG\setminus\calD$ would have at least 2 components
  containing at least 6 vertices.
\end{proof}

The following results will prove that, if $\calT$ is a root with at most $11$
vertices and $(\calG,\calD,\{\calD_1,\ldots,\calD_n\})$ is minimal, then only
three cases for $\calT\setminus(\calG\cup\calD)$ may arise.

\begin{prop}\label{prop:length_D1}
  If $V(\calT)\leqslant 11$, each minimal decomposition of $\calT$ is of
  type $(\calG,\calD,\emptyset)$ or of type $(\calG,\calD,\{\calD_1\})$ where
  $V(\partial\calD_1)$ is $3$ or $4$.
\end{prop}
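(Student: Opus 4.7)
The overall approach is to combine Proposition~\ref{prop:2comp} with the vertex-count identity from Proposition~\ref{prop:vert-tria_D} to control both the number of extra discs $n$ and the length of $\partial\calD_1$. First I would note that, since the sub-triangulations $\calG,\calD,\calD_1,\dots,\calD_n$ pairwise intersect only in circles and their union is the closed surface $\calT$, cutting $\calT$ along these circles gives back exactly these pieces; each boundary component of $\calG$ is therefore shared with precisely one disc, and (ruling out the degenerate possibility of two discs glued along a circle, which would force $\calT=S^2$, a case handled by Example~\ref{ex:sphere_dec} with $n=0$) we get $n+1=\#\partial\calG$. Proposition~\ref{prop:2comp} then forces $n\in\{0,1\}$, which gives the asserted decomposition types.

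In the case $n=1$ I still need $V(\partial\calD_1)\leqslant 4$; the lower bound $V(\partial\calD_1)\geqslant 3$ is automatic since $\partial\calD_1$ is a triangulated circle. The central tool is the disjoint partition of vertices
$$V(\calT) \;=\; V(\inter(\calG))+V(\calD)+V(\calD_1),$$
obtained by grouping each of the two boundary circles of $\calG$ with its adjacent disc. Combining this with $V(\calT)\leqslant 11$ and the bound $V(\calD)\geqslant\mv(\calT)+1$ of Proposition~\ref{prop:vert-tria_D} yields $V(\partial\calD_1)\leqslant V(\calD_1)\leqslant 10-\mv(\calT)$, which is $\leqslant 4$ as soon as $\mv(\calT)\geqslant 6$.

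The main obstacle is ruling out the low-valence scenarios compatible with $n=1$, and I would split this into three subcases. If $\mv(\calT)\leqslant 4$, Remark~\ref{rem:maxval_4} together with the small enumeration performed inside the proof of Proposition~\ref{prop:2comp} already forces $V(\calT)\leqslant 6$, so $V(\calG)\leqslant 5$ is too small to carry two disjoint triangulated circles (each requiring at least three vertices). If $\mv(\calT)=5$ and $\calT$ is a sphere, Example~\ref{ex:sphere_dec} already pins every minimal decomposition to $n=0$. Finally, if $\mv(\calT)=5$ and $\calT$ is non-sphere, the global valence estimate $3T(\calT)=\sum_v\val{v}\leqslant 5V(\calT)$ yields $\chi(\calT)=V(\calT)-T(\calT)/2\geqslant V(\calT)/6>0$, forcing $\chi(\calT)=1$; Heawood's bound combined with the same valence inequality then pins $\calT$ down to the projective plane with six vertices, whose $V(\calG)\leqslant 5$ again prevents two boundary components. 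In each subcase $n=1$ is impossible, so we must have $\mv(\calT)\geqslant 6$ and the earlier estimate closes the argument.
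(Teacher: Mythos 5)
Your proof is correct, and it takes a genuinely different route from the paper's in the one place where real work is needed: ruling out $V(\partial\calD_1)=5$. The paper proceeds by contradiction from $V(\partial\calD_1)>4$, and the same vertex-count inequalities you write down pin the hypothetical configuration completely ($V(\calD_1)=5$, $V(\calD)=6$, $\mv(\calT)=5$, $\calD=\clst{v}$, $V(\inter(\calD_1))=0$, so $\calD_1$ is the specific $5$-vertex fan of Fig.~\ref{fig:no_int_tria}); it then derives a contradiction locally, by tracking the unique $3$-valent boundary vertex $w$ of $\calD_1$, applying Propositions~\ref{prop:link_edge_bnd} and~\ref{prop:deg_val_vert_S}, and exhibiting a neighbour $w'$ with $\val{w'}\geqslant 6>\mv(\calT)$. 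You instead dispose of the entire regime $\mv(\calT)\leqslant 5$ globally: the identity $3T(\calT)=\sum_v\val{v}$ and the bound $\val{v}\leqslant\mv(\calT)$ give $\chi(\calT)\geqslant V(\calT)(6-\mv(\calT))/6$, forcing $\chi>0$ when $\mv\leqslant 5$, and then Heawood plus the same inequality pins the non-sphere case to the $6$-vertex $\matRP^2$, whose genus-surface has too few vertices to carry two boundary circles. The remaining case $\mv(\calT)\geqslant 6$ falls to the same vertex-count bookkeeping both proofs use ($V(\partial\calD_1)\leqslant V(\calT)-V(\calD)\leqslant 10-\mv(\calT)\leqslant 4$). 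Your argument is shorter and avoids any delicate case-by-case geometry near a boundary vertex; what it buys in simplicity it pays for in being slightly less local (it relies on the global Euler-characteristic estimate and on Heawood's bound, which the paper's proof of this particular proposition does not invoke). Both arguments share the same skeleton — Example~\ref{ex:sphere_dec} for spheres, Proposition~\ref{prop:2comp} to get $n\leqslant 1$, Proposition~\ref{prop:vert-tria_D} for the vertex-count estimate — and differ only in how they exclude the borderline valence-$5$ scenario.
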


\begin{proof}
By Example~\ref{ex:sphere_dec}, all minimal decompositions of a triangulated
sphere are of type $(\{T\},\calT\setminus\{T\},\emptyset)$, where $T$ is a
triangle; hence the proposition is obvious for triangulated spheres.
Therefore, suppose $\calT$ is not a triangulated sphere (recall that
$\mv(\calT)\geqslant 5$ by Remark~\ref{rem:maxval_4}).
By Proposition~\ref{prop:2comp}, we have that $\partial\calG$ has either one or two
boundary components.
In the first case, there is nothing to prove.
In the second case, the decomposition is $(\calG,\calD,\{\calD_1\})$.
We suppose by contradiction that $V(\partial\calD_1)>4$.
By Proposition~\ref{prop:vert-tria_D}, we have $V(\calD) \geqslant
\mv(\calT)+1 \geqslant 6$ and $V(\calD_1)\leqslant V(\calT)-V(\calD) \leqslant
5$; hence $V(\calD_1)=5$.
We also have $6 \leqslant \mv(\calT)+1 \leqslant V(\calD) \leqslant
V(\calT)-V(\partial\calD_1) \leqslant 6$, hence $V(\calD)=6$, $\mv(\calT)=5$,
and $\calD=\clst{v}$, where $v$ is the maximal-valence vertex of $\calD$.
Moreover, $V(\inter(\calD_1)) \leqslant V(\calT)-V(\calD)-V(\partial\calD_1)
\leqslant 0$, hence $V(\inter(\calD_1))=0$ and $\calD_1$ is the triangulation shown
in Fig.~\ref{fig:no_int_tria}.
\begin{figure}
  \centerline{\includegraphics{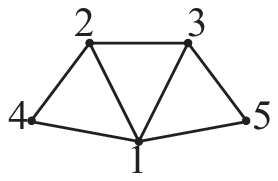}}
  \centerline{\small 123 124 135}
  \caption{The unique triangulated surface $\calD_1$ with $V(\partial\calD_1)=5$ and
    $V(\inter(\calD_1))=0$.}
  \label{fig:no_int_tria}
\end{figure}

Let us call $w$ the unique 3-valent vertex of $\calD_1$.
Since $\mv(\calT)=5$, we have that the valence of $w$ in $\calG$ is 2 (it
cannot be 1 by Proposition~\ref{prop:deg_val_vert_S}).
Let us call $T_1$ and $T_2$ the two triangles in $\calG$ incident to $w$, and
$w'$ the other vertex in $T_1\cap T_2$.
See Fig.~\ref{fig:bnd_D1}.
\begin{figure}
  \psfrag{v}{\small $v$}
  \psfrag{D}{\small $\calD$}
  \psfrag{D1}{\small $\calD_1$}
  \psfrag{w}{\small $w$}
  \psfrag{wp}{\small $w'$}
  \psfrag{T1}{\small $T_1$}
  \psfrag{T2}{\small $T_2$}
  \psfrag{G}{\small $\calG$}
  \centerline{\includegraphics{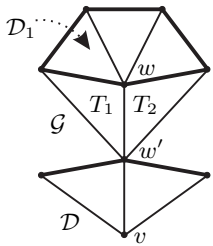}}
  \caption{If $V(\partial\calD_1)>4$, there is a 6-valent vertex in $\partial
    D$.}
  \label{fig:bnd_D1}
\end{figure}
By Proposition~\ref{prop:link_edge_bnd}, we have $w'\in\partial\calG$;
moreover, since $w$ is adjacent in $\calD_1$ to all the other boundary
vertices of $\calD_1$, then $w'$ does not belong to $\calD_1$ (for, otherwise,
$\calT$ would not be a triangulation of a surface); hence $w'\in\calD$.
The edge of $T_1$ not incident to $w$ is not contained in
$\partial\calD$ (for, otherwise, $\calD$ and $\calD_1$ would have non-empty
intersection); the same holds for $T_2$, hence the valence of $w'$ in $\calG$
is at least $4$.
Finally, the valence of $w'$ in $\calD$ is $2$, because $\calD=\clst{v}$.
Hence $\val{w'}\geqslant 6$, see Fig.~\ref{fig:bnd_D1}: this is a
contradiction to $\mv(\calT)=5$.
\end{proof}

\begin{prop}
  If $\calT$ is a root and $V(\calT)\leqslant 11$, then each minimal
  decomposition of $\calT$ is of type $(\calG,\calD,\emptyset)$ or of type
  $(\calG,\calD,\{\calD_1\})$ where $V(\inter(\calD_1))=0$.
\end{prop}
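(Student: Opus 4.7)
My plan is to extend Proposition~\ref{prop:length_D1}. That result already tells us that, in the nontrivial case $n=1$, we have $V(\partial\calD_1)\in\{3,4\}$, so the only thing left is to rule out $V(\inter(\calD_1))\geqslant 1$. The sphere case is immediate from Example~\ref{ex:sphere_dec}, so from now on I assume $\calT$ is a non-sphere root and hence $\mv(\calT)\geqslant 5$ by Remark~\ref{rem:maxval_4}.

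First I would dispose of the smallest configuration: if $V(\partial\calD_1)=3$ and $V(\inter(\calD_1))=1$, then Euler's formula for the disc $\calD_1$ gives $T(\calD_1)=3$, so the unique interior vertex of $\calD_1$ is $3$-valent in $\calT$, contradicting that $\calT$ is a non-tetrahedron root (the tetrahedron has only the trivial decomposition, with $n=0$). For the remaining configurations I would use the vertex count $V(\calT)=V(\calD)+V(\calD_1)+V(\inter(\calG))$ together with $V(\calT)\leqslant 11$ and $V(\calD)\geqslant\mv(\calT)+1\geqslant 6$ from Proposition~\ref{prop:vert-tria_D}. Since $V(\calD_1)\geqslant 5$ holds in every remaining subcase, every inequality must be tight: $V(\calT)=11$, $\mv(\calT)=5$, $V(\calD)=6$ (so $\calD=\clst{v}$ for the unique $5$-valent vertex $v\in\inter(\calD)$), $V(\calD_1)=5$, and $V(\inter(\calG))=0$.

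The heart of the proof, which I expect to be the main obstacle, is to derive a contradiction from these rigid configurations. I would use an Euler-characteristic argument on $\calG$. Since $\inter(\calG)=\emptyset$, we have $3T(\calG)=\sum_{w\in\partial\calG}\valG{w}$. On the $\partial\calD$ side, each vertex $w$ satisfies $\valG{w}=\val{w}-2\leqslant 3$; on the $\partial\calD_1$ side, the inequality $\valG{w}\leqslant 5-{\rm val}_{\calD_1}(w)$ combined with the identity $3T(\calD_1)=\sum_{w\in\calD_1}{\rm val}_{\calD_1}(w)$ and the bound ${\rm val}_{\calD_1}(u)\leqslant 5$ for every $u\in\inter(\calD_1)$ yields a direct upper bound on $T(\calG)$: at most $9$ in the subcase $V(\partial\calD_1)=4$ and at most $8$ in the subcase $V(\partial\calD_1)=3$.

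Finally, using $V(\calG)$ and the identity $2E(\calG)=3T(\calG)+V(\partial\calG)$, the bound on $T(\calG)$ translates into $\chi(\calG)\geqslant 0$ in both subcases. But inclusion-exclusion over the decomposition (all pairwise intersections of $\calG$, $\calD$, $\calD_1$ being circles or empty) gives $\chi(\calG)=\chi(\calT)-2$, and since $\calT$ is non-sphere, $\chi(\calT)\leqslant 1$ forces $\chi(\calG)\leqslant -1$, the desired contradiction.
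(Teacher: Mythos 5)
Your proof is correct and takes a genuinely different route from the paper. The paper also first restricts to the non-sphere case via Example~\ref{ex:sphere_dec}, invokes Proposition~\ref{prop:length_D1}, and then handles the two subcases $V(\partial\calD_1)=4$ and $V(\partial\calD_1)=3$; but from there it proceeds by a long combinatorial chase using Remark~\ref{rem:types_tria} (the $\nI,\nII$ gluing count) and Proposition~\ref{prop:deg_val_vert_S} to pin down $\calD$, $\calD_1$ and $\calG$ completely, in the first subcase showing that $\calG$ must be a specific $9$-vertex annulus (contradicting Example~\ref{ex:sphere_dec}), and in the second subcase forcing $V(\calT)\geqslant 12$. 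You instead compress both subcases into a single rigid numerical configuration ($V(\calT)=11$, $\mv(\calT)=5$, $\calD=\clst{v}$, $V(\calD_1)=5$, $\inter(\calG)=\emptyset$) and derive the contradiction uniformly from $\chi(\calG)\geqslant 0$ versus $\chi(\calG)=\chi(\calT)-2\leqslant -1$. This Euler-characteristic argument is cleaner and avoids identifying any explicit surface; what the paper's longer argument buys is a more elementary step-by-step derivation and explicit near-miss configurations, which fit its style of mining structural restrictions for the search algorithm.

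One small gap you should spell out: the claim $\calD=\clst{v}$ is not automatic from $V(\calD)=6$ alone. It does follow, but via Remark~\ref{rem:types_tria}: $V(\calD)=1+\mv(\calT)+\nI$, so $V(\calD)=6$ and $\mv(\calT)=5$ force $\nI=0$; then item~\ref{pt:nI_nII} of that remark (for a non-tetrahedron root) forces $\nII=0$, hence $\calD=\clst{v}$. With that observation inserted, the proof is complete. You might also note that the bound $\chi(\calG)\geqslant 0$ already rules out two boundary components (a connected surface with two boundary circles and $\chi\geqslant 0$ is an annulus, hence $\calT$ would be a sphere), which gives a slightly more geometric phrasing of the same contradiction.
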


\begin{proof}
By Example~\ref{ex:sphere_dec}, all minimal decompositions of a triangulated
sphere are of type $(\{T\},\calT\setminus\{T\},\emptyset)$, where $T$ is a
triangle; hence the proposition is obvious for triangulated spheres.
Therefore, suppose $\calT$ is not a triangulated sphere (recall that
$\mv(\calT)\geqslant 5$ by Remark~\ref{rem:maxval_4}).
By Proposition~\ref{prop:length_D1}, we have three possibilities for the
decomposition of $\calT$.
It can be of type $(\calG,\calD,\emptyset)$, of type $(\calG,\calD,\{\calD_1\})$ where
$V(\partial\calD_1)=4$, or of type $(\calG,\calD,\{\calD_1\})$ where
$V(\partial\calD_1)=3$.
For the first type, there is nothing to prove.

Let us analyze the case where the decomposition is of type
$(\calG,\calD,\{\calD_1\})$ and $V(\partial\calD_1)=4$.
Suppose by contradiction that $V(\inter(\calD_1))>0$.
Since $V(\partial\calD) \geqslant 3$, we have $V(\inter(\calD)) \leqslant
V(\calT)-V(\partial\calD_1)-V(\inter(\calD_1))-V(\partial\calD) \leqslant 3$.
We will now use notations (and results) of Remark~\ref{rem:types_tria}.
We have $\nII\leqslant 2$, and then
$$
V(\partial\calD)=\mv(\calT)+\nI-\nII\geqslant\left\{
  \begin{array}{ll}
    \mv(\calT)\geqslant 5 & \mbox{if $\nI=0$} \\[2pt]
    \mv(\calT)+1-\nII\geqslant 4 & \mbox{if $\nI>0$}
  \end{array}
\right..
$$
Now, we can repeat this technique, getting $V(\inter(\calD)) \leqslant
V(\calT)-V(\partial\calD_1)-V(\inter(\calD_1))-V(\partial\calD) \leqslant 2$,
$\nII\leqslant 1$, and then
$$
V(\partial\calD)=\mv(\calT)+\nI-\nII\geqslant\left\{
  \begin{array}{ll}
    \mv(\calT)\geqslant 5 & \mbox{if $\nI=0$} \\[2pt]
    \mv(\calT)+1-\nII\geqslant 5 & \mbox{if $\nI>0$}
  \end{array}
\right..
$$
Hence, we get $V(\partial\calD)\geqslant 5$, $V(\inter(\calD))=1$, and then
$\calD=\clst{v}$, where $v$ is the maximal-valence vertex of $\calD$.
Moreover, we have $V(\calD) \leqslant V(\calT)-V(\partial\calD_1)-V(\inter(\calD_1))
\leqslant 6$ and hence $\val{v}=\mv(\calT)=5$.

We also have $\calD_1=\clst{w}$, where $w$ is the vertex in
$\inter(\calD_1)$, because $V(\inter(\calD_1)) \leqslant
V(\calT)-V(\calD)-V(\partial\calD_1) \leqslant 1$, $\val{w}\geqslant 4$, and
$V(\partial\calD_1)=4$.
Now, note that $V(\inter(\calG))=0$ and that each vertex in $\partial\calG$ has
valence $2$ or $3$ (for the valence of these vertices in $\calD$ or $\calD_1$
is $2$, the maximal valence in $\calT$ is $5$, and the valence in $\calG$ is
at least $2$ by Proposition~\ref{prop:deg_val_vert_S}).
With the same technique used in the end of the proof of
Proposition~\ref{prop:length_D1}, we can prove that no vertex in
$\partial\calG$ has valence $2$ (the only difference is in the proof that $w$
and $w'$ do not belong to the same disc $\calD$ or $\calD_1$, where the
property $2\leqslant\valG{v}\ \forall v\in\inter(\calG)$ from
Proposition~\ref{prop:deg_val_vert_S} should be used).
Hence, all vertices in $\partial\calG$ have valence $3$.
This and the fact that $V(\inter(\calG))=0$ easily imply that $\calG$ is the
annulus shown in Fig.~\ref{fig:annulus_9}: a contradiction to
Example~\ref{ex:sphere_dec}.
\begin{figure}
  \centerline{\includegraphics{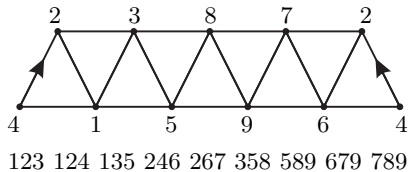}}
  \centerline{\small 123 124 135 246 267 358 589 679 789}
  \caption{The unique genus-surface $\calG$ such that $V(\inter(\calG))=0$,
    $\valG{w}=3$ for each vertex $w\in\partial\calG$, and with two boundary
    components with $4$ and $5$ vertices respectively.}
  \label{fig:annulus_9}
\end{figure}

Let us analyze now the case where the decomposition is of type
$(\calG,\calD,\{\calD_1\})$ and $V(\partial\calD_1)=3$.
Suppose by contradiction that $V(\inter(\calD_1))>0$.
Let us call $w$ a vertex in $\inter(\calD_1)$.
Obviously, we have $\val{w}\geqslant 4$.
By applying the same technique used to construct main discs (see
Remark~\ref{rem:types_tria}, also for notations), we easily get $\nI>0$
and $\nII = \val{w}+\nI-V(\partial\calD_1) \geqslant 2$.
Hence, $V(\inter(\calD_1)) = 1+\nII\geqslant 3$, $V(\calD_1)\geqslant 6$, and
$V(\inter(\calD)) \leqslant V(\calT)-V(\calD_1)-V(\partial\calD) \leqslant 2$.

We now apply Remark~\ref{rem:types_tria} to $\calD$, getting
$$
V(\partial\calD)=\mv(\calT)+\nI-\nII\geqslant\left\{
  \begin{array}{ll}
    \mv(\calT)\geqslant 5 & \mbox{if $\nI=0$} \\[2pt]
    \mv(\calT)+1-(V(\inter(\calD))-1)\geqslant 5 & \mbox{if $\nI>0$}
  \end{array}
\right..
$$
Hence, we have $V(\calD)\geqslant 6$, and then $V(\calT)\geqslant
V(\calD)+V(\calD_1)=12$: a contradiction to the hypothesis $V(\calT)\leqslant
11$.
\end{proof}

The two propositions above obviously yield the following result.
\begin{cor}\label{cor:1-2_tria}
  If $\calT$ is a root and $V(\calT)\leqslant 11$, then each minimal
  decomposition of $\calT$ is of type $(\calG,\calD,\emptyset)$ or of type
  $(\calG,\calD,\{\calD_1\})$ where $\calD_1$ is made up of one or two
  triangles.
\end{cor}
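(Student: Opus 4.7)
The plan is to assemble the corollary directly from the two preceding propositions together with a short Euler-characteristic count. First, I would invoke Proposition~\ref{prop:length_D1} to reduce to the three possibilities for a minimal decomposition of $\calT$: either $(\calG,\calD,\emptyset)$, in which case there is nothing to prove, or $(\calG,\calD,\{\calD_1\})$ with $V(\partial\calD_1)\in\{3,4\}$. The proposition immediately preceding the corollary then forces $V(\inter(\calD_1))=0$ in the latter situation, so every vertex of $\calD_1$ lies on its boundary.

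Next, I would translate the constraint $V(\inter(\calD_1))=0$ into an exact triangle count. Since $\calD_1$ is a triangulated disc, $\chi(\calD_1)=1$, hence $V(\calD_1)-E(\calD_1)+T(\calD_1)=1$. Writing $b=V(\partial\calD_1)$, the boundary is a triangulated circle contributing exactly $b$ edges, and double counting face-edge incidences gives $3T(\calD_1)=2E(\calD_1)-b$, since each boundary edge lies in one triangle and each interior edge in two. Combining these relations with $V(\calD_1)=b$ yields $T(\calD_1)=b-2$.

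Substituting $b=3$ gives $T(\calD_1)=1$, so $\calD_1$ is a single triangle; substituting $b=4$ gives $T(\calD_1)=2$, so $\calD_1$ consists of two triangles glued along a diagonal. This is precisely the conclusion of the corollary.

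Since all the substantive work has been done in the two propositions cited, there is no real obstacle here: the corollary is essentially a combinatorial cleanup via Euler's formula. The only point requiring any care is the standard edge-incidence bookkeeping for a disc with boundary, which is entirely routine.
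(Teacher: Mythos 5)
Your proof is correct and follows the same route as the paper, which simply states that the two preceding propositions ``obviously yield'' the corollary. Your Euler-characteristic bookkeeping, showing a disc with $b$ boundary vertices and no interior vertices has exactly $b-2$ triangles, just fills in the elementary step the paper leaves implicit.
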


\paragraph{Computational results}
The computer program {\tt trialistgs11} implementing the algorithm described in
Section~\ref{sec:listing_algorithm}, specialized for the 11-vertex case with the results of
Section~\ref{sec:restrictions} and of this section, can be found
in~\cite{amendola:sito}.
Such results have simplified the search: for instance,
\begin{itemize}
\item by Proposition~\ref{prop:length_D1}, we search only for genus-surfaces
  with either one or two boundary components, and in the latter case
  one of the components must contain at most 4 vertices;
\item by Corollary~\ref{cor:1-2_tria}, there are only two cases for the
  triangulated disc $\calD_1$ (when $\partial\calG$ has two components);
\item by Proposition~\ref{prop:lenght_bnd_condition}, we can discard the
  genus-surfaces not fulfilling some properties.
\end{itemize}

The numbers of genus-surfaces found with at most 10 vertices are listed in
Table~\ref{tab:gs_numbers}, while the numbers of roots and non-roots with at most 11
vertices are listed in Table~\ref{tab:tria_numbers}.
\begin{table}
\begin{center}
\begin{small}
\begin{tabular*}{8cm}{@{}l@{\extracolsep{12pt}}l@{\extracolsep{12pt}}r@{\extracolsep{\fill}}l@{\extracolsep{12pt}}l@{\extracolsep{12pt}}r@{}}
  \toprule\\[-3mm]
$V$ & $S$        &  Number & $V$ & $S$        &  Number \\
\cmidrule{1-3}\cmidrule{4-6}
  3 & $S^2$      &       1 &   9 & $T^2$      &     230 \\
    &            &         &     & $S^+_2$    &    1261 \\
  5 & $\matRP^2$ &       1 &     & $S^+_3$    &      59 \\
    &            &         &     & $\matRP^2$ &      28 \\
  6 & $T^2$      &       1 &     & $K^2$      &     597 \\
    & $\matRP^2$ &       2 &     & $S^-_3$    &    6919 \\
    &            &         &     & $S^-_4$    &   18166 \\
  7 & $T^2$      &       5 &     & $S^-_5$    &   18199 \\
    & $\matRP^2$ &       6 &     & $S^-_6$    &    4994 \\
    & $K^2$      &      10 &     & $S^-_7$    &      78 \\
    &            &         &     &            &         \\
  8 & $T^2$      &      46 &  10 & $T^2$      &    1513 \\
    & $\matRP^2$ &      11 &     & $S^+_2$    &   50878 \\
    & $K^2$      &     108 &     & $S^+_3$    &   99177 \\
    & $S^-_3$    &     284 &     & $S^+_4$    &    3892 \\
    & $S^-_4$    &     134 &     & $\matRP^2$ &     356 \\
    & $S^-_5$    &       3 &     & $K^2$      &    3864 \\
    &            &         &     & $S^-_3$    &   82588 \\
    &            &         &     & $S^-_4$    &  713714 \\
    &            &         &     & $S^-_5$    & 3006044 \\
    &            &         &     & $S^-_6$    & 5672821 \\
    &            &         &     & $S^-_7$    & 4999850 \\
    &            &         &     & $S^-_8$    & 1453490 \\
    &            &         &     & $S^-_9$    &   53484 \\[1mm]
\bottomrule
  \end{tabular*}
\end{small}
\end{center}
\caption{Number of genus-surfaces with at most 10 vertices, used to list
  closed triangulated surfaces with at most 11 vertices,
  depending on the number of vertices $V$ and on the closed surface $S$ obtained by
  gluing discs to their boundary.}
  \label{tab:gs_numbers}
\end{table}
It is worth noting that we are searching for closed triangulated surfaces with at
most 11 vertices, hence we get a list of the genus-surfaces needed to construct
those triangulated surfaces.
If we had searched for closed triangulated surfaces with less vertices, we
would have found a shorter list of genus-surfaces.

The computer program carries out the search for a fixed homeomorphism type
({\rm i.e.}~genus and orientability) of the surface each time.
The longest case is the $S^-_{7}$-case taking $12$~days on a $2.33$GHz
notebook-processor to obtain the list.

\vspace{24pt} \noindent\textbf{Acknowledgments.}\small \quad
I am very grateful to Prof.~J\"urgen Bokowski and Simon King for the useful
discussions I have had in the beautiful period I have spent at the Department
of Mathematics in Darmstadt.
I would like to thank the Galileo Galilei Doctoral School of Pisa and the DAAD
(Deutscher Akademischer Austausch Dienst) for giving me the opportunity to
stay in Darmstadt, and Prof.~Alexander Martin for his willingness.
I would also like to thank the referee for the useful comments and corrections.

This paper is dedicated to Paolo.

\begin{small}

\end{small}

\end{document}